\theoremstyle{plain}
\theoremstyle{definition}
\theoremstyle{remark}
\newenvironment{leftalign*}[1][\parindent]{\setlength\hangindent{#1}\start@align\tw@\st@rredtrue\m@ne}{\endalign}
\renewcommand{\phi}{\varphi}
\newcommand{\ran}{\operatorname{ran}}
\renewcommand{\hat}{\widehat}
\let\oldenumerate=\enumerate
	\def\enumerate{
	\oldenumerate
	\setlength{\itemsep}{5pt}
	}
\let\olditemize=\itemize
	\def\itemize{
	\olditemize
	\setlength{\itemsep}{5pt}
	}
\newtheorem{Theorem}{Theorem}[section]
\newtheorem{Lemma}[Theorem]{Lemma}
\newtheorem{Proposition}[Theorem]{Proposition}
\newtheorem{Corollary}[Theorem]{Corollary}
\newtheorem{Definition}[Theorem]{Definition}
\newtheorem{Question}[Theorem]{Question}
\newtheorem{Example}[Theorem]{Example}
\newtheorem{Remark}[Theorem]{Remark}
\numberwithin{equation}{section}
\title{Bounded Toeplitz Products on the Hardy Space}
\author{Ryan O'Loughlin}
\date{}
\begin{document}

\maketitle
\begin{abstract}
A Toeplitz operator on the Hardy space of the unit circle is bounded if and only if its symbol is bounded. For two Toeplitz operators, there are no known function-theoretic conditions for their symbols, which are equivalent to the product of the Toeplitz operators being bounded. In this paper, we provide a solution to this problem, by showing under certain assumptions that the product of two Toeplitz operators is bounded if and only if the product of their symbols is bounded.

 \vskip 0.5cm
\noindent Keywords: Toeplitz operator, Hardy space, Banach space operators.
 \vskip 0.5cm
\noindent MSC: 47B35, 30H10, 47B48.
\end{abstract}
\section{Introduction}
The boundedness of Toeplitz operators was characterized by Otto Toeplitz in 1912 \cite{Toeplitzsoriginal1, Toeplitzsoriginal2} with his elegant characterization, showing that a Toeplitz operator is bounded if and only if its symbol is bounded (see Section 1.1 in \cite{o2021multidimensional} for a detailed history of the topic). This led to a rich active area of research at the intersection between Operator Theory and Complex Analysis. Despite more than a century of research into Toeplitz operators, there is no function-theoretic characterisation for the product of Toeplitz operators being bounded. The purpose of this paper is to provide such a characterisation, by showing under certain criteria, the product of Toeplitz operators is bounded if and only if the product of their symbols is bounded.

In 1994, Sarason posed a more specialized version of the problem in the problem book \cite{sarasonprobbook}.

\begin{Question}[Problem 7.9 \cite{sarasonprobbook}]\label{Q}
Characterize the pairs of outer functions $u, v$ in $H^{2}$ of the unit disk such that the operator $T_{u} T_{\overline{v}}$ is bounded on $H^{2}$.
\end{Question}

Over the past three decades there have been many contributions to this open question in the context of the Hardy, Bergman and Fock spaces, for example, \cite{prodH1, prodH2, prodH3, prodB1,prodB2, prodB3, prodF2, prodF3, stroethoff2002invertible} as well as many others. Although Toeplitz operators have a well-established theory, questions on Toeplitz products remain particularly challenging (see, for example, the zero Toeplitz product problem \cite{AlemanDuke}).


Characterising the boundedness of products of Toeplitz operators is equivalent to a problem in Harmonic Analysis, which asks when the two-weighted Riesz projection is bounded (see Section \ref{Harmonicanalysis} for a detailed explanation of this). Since the setup and techniques we pursue lend themselves better to function theory, we approach the problem from the perspective of Toeplitz operators and express our results in terms of the two-weighted Riesz projection in Section \ref{Harmonicanalysis}.

The main new result is Theorem \ref{mainT}, which states that when the symbols of two Toeplitz operators are an admissible Toeplitz pair (defined as Definition \ref{ATP}) the product of the Toeplitz operators is bounded if and only if the product of their symbols is bounded. In Section \ref{2} we provide background on Hardy spaces and Toeplitz operators. Section \ref{3} develops new theory on the poles of functions in the Hardy space, which is crucial for the proof of Theorem \ref{mainT}. Section \ref{4} presents the proof of Theorem \ref{mainT}. Section \ref{Harmonicanalysis} expresses new results in terms of the two-weighted Riesz projection and proves a conjecture of Sarason on Question \ref{Q} when the symbols of the operators are an admissible Toeplitz pair. Section \ref{Harmonicanalysis} also shows the product of Toeplitz operators is bounded if and only if a certain weighted weighted Lebesgue measure is a Carleson measure for the range space of a Toeplitz operator.

\section{Preliminaries}\label{2}
\subsection{Hardy Space Theory}
In order to prove Theorem \ref{mainT} one needs an understanding of the Hardy spaces, denoted $H^p(\mathbb{T})$, for $0<p \leqslant \infty$. Due the $H^p(\mathbb{T})$ spaces for $p < 1$ being less well known, we include a brief discussion of these spaces. An excellent source for such spaces (in which all the following results can be found) is \cite{cima2000backward}. For $p \in(0, \infty)$, the Hardy space $H^p(\mathbb{T})$ is the set of analytic functions $f$ on the open unit disk, $\mathbb{D}:=\{z \in \mathbb{C} :|z|<1\}$, for which the quantity
$$
\|f\|_{p}:=\sup _{0<r<1}\left\{\int_{0}^{2 \pi}\left|f\left(r e^{i \theta}\right)\right|^{p} \frac{d \theta}{2 \pi}\right\}^{1 / p}
$$
is finite. For $p \in[1, \infty)$, the quantity $\|f\|_{p}$ defines a norm on $H^p(\mathbb{T})$ which makes it a Banach space; while for $p \in(0,1)$, the metric $\rho(f, g):=\|f-g\|_{p}^{p}$ 
makes $H^p(\mathbb{T})$ a a complete topological vector space with
topology given by a translation invariant metric (Theorem 3.2.7 in \cite{cima2000backward}). For $f \in H^p(\mathbb{T})$ where $0 < p < \infty$, Theorem 3.2.3 in \cite{cima2000backward} says that the radial limit $f(\zeta):=\lim _{r \rightarrow 1^{-}} f(r \zeta)
$
exists for almost every $\zeta \in \mathbb{T}:=\{z \in \mathbb{C} :|z|=1\}$ and furthermore for $\mathrm{d}m:=\mathrm{d} \theta / 2 \pi$, the normalised Lebesgue measure on the unit circle $\mathbb{T}$,
\begin{equation}\label{obs}
    \|f\|_{p}:=\sup _{0<r<1}\left\{\int_{\mathbb{T}}|f(r \zeta)|^{p} d m(\zeta)\right\}^{1 / p}=\left\{\int_{\mathbb{T}}|f(\zeta)|^{p} d m(\zeta)\right\}^{1 / p} .
\end{equation}
With \eqref{obs}, we will freely identify a function $f$ in the $H^p(\mathbb{T})$ space with its radial boundary values on $\mathbb{T}$, thus viewing the $H^p(\mathbb{T})$ space as lying inside the space $L^p (\mathbb{T})$. $H^{2}(\mathbb{T})$ is a RKHS with reproducing kernel at $x  \in \mathbb{D}$ given by $
\Tilde{k_{x}}(z):=\frac{1}{1-\overline{x} z}$
meaning that for $f \in H^2(\mathbb{T})$, we have
$
f(x)=\left\langle f, \Tilde{k_{x}}\right\rangle_{H^{2}}
$ We say a function $h \in H^p(\mathbb{T})$ is inner if $| h | = 1$ a.e. on $\mathbb{T}$. An outer function is a holomorphic function on $\mathbb{D}$, $G $ such that $$
G(z)=c \exp \left(\int_{\mathbb{T}} \frac{\zeta +z}{\zeta-z} \log \left(\varphi\left(\zeta \right)\right) \mathrm{d}m(\zeta) \right)
$$
for some $c \in \mathbb{C}$ with $|c|=1$, and some function $\varphi$ on $\mathbb{T}$ such that $\log (\varphi)$ is integrable on $\mathbb{T}$. A classical result (which can be found as Theorem 3.2.4 in \cite{cima2000backward}) states that for $0 < p < \infty$ every $f \in H^p(\mathbb{T})$  can be factorised so that $f = f^i f^0$, where $f^i$ is an inner function and $f^o$ is an outer function. 

For a function $f \in L^p(\mathbb{T})$, where $1 \leqslant p \leqslant \infty$, define the Riesz projection $P$ by 
$$
(P f)(z):=\sum_{n=0}^{\infty} \hat{f}(n) z^{n}=\int_{\mathbb{T}} \frac{f(\zeta)}{1-\overline{\zeta} z} d m(\zeta), \quad z \in \mathbb{D},
$$
where $\hat{f}(n) :=\int_{\mathbb{T}} f(\zeta) \overline{\zeta}^{n} d m(\zeta)$. Observe that as $| \hat{f}(n) | = O(1)$ for all $ f \in L^1(\mathbb{T})$, $P(f)$ defines an analytic function in $\mathbb{D}$. 

\begin{Definition}
    $H(\mathbb{T})$ is the space of holomorphic functions on $\mathbb{D}$ that have $a.e.$ defined radial boundary values on $\mathbb{T}$. We equip this space with the topology of uniform convergence on compact subsets, i.e., $f_n \rightarrow f_0$ in $H(\mathbb{T})$ if for all compact $K \subseteq \mathbb{D}$, $f_n \rightarrow f_0$ uniformly on $K$.
\end{Definition}
We remark that for all $0<p$, $H^p (\mathbb{T})  \subseteq N^+\subseteq N \subseteq H(\mathbb{T})$, where $N^+$ is the Smirnov class and $N$ is the Nevanlinna class.

For a set of analytic functions on the disc $X$, we will the write $\overline{X}$ to mean the conjugate of all functions in $X$, i.e. $\overline{X} =  \{ \overline{f(z)} :  f(z) \in X \}$. For $1<p< \infty$, Section 3.3 in \cite{cima2000backward} shows that the Riesz projection is a bounded operator $P: L^p(\mathbb{T}) \to H^p(\mathbb{T})$, and consequently for $1< p < \infty$, we can write $L^p(\mathbb{T}) = \overline{ z H^p(\mathbb{T})} \oplus H^p(\mathbb{T})$. We also note that Section 3.3 in \cite{cima2000backward} demonstrates $P (L^1(\mathbb{T})) \neq H^1(\mathbb{T})$. Throughout, we use the notation $B_{\epsilon}(y)$ to mean to open ball radius $\epsilon$ centred at $y$. 

\subsection{Toeplitz operators}

\begin{Definition}
    The Toeplitz operator with symbol $g \in L^2(\mathbb{T})$ is 
    the mapping $f \mapsto P(gf)$ (where here the multiplication $gf$ is understood as multiplication of the functions $f$ and $g$ on $\mathbb{T}$).
\end{Definition}

Let $u \in H(\mathbb{T}), v \in L^2(\mathbb{T})$ and let $0 <s<1$. Remark 3.4.3 in \cite{cima2000backward} shows that $P: L^1(\mathbb{T}) \to H^s(\mathbb{T})$ is continuous and consequently the map  $\Tilde{T_{{v}}}: H^2  \to H^s(\mathbb{T})$ such that $\Tilde{T}_{v} f = P ( {v} f)$ is continuous. Define $\Tilde{T_u} : H^s(\mathbb{T}) \to H(\mathbb{T})$ by $f \mapsto uf$. As $H^s(\mathbb{T})$ convergence implies convergence in $H(\mathbb{T})$ (see Proposition 3.2.6 in \cite{cima2000backward}) $\Tilde{T_u}$ is also continuous, so the map \begin{equation}\label{Ttilde}
    \Tilde{T} := \Tilde{T_u} \Tilde{T_{{v}}} :  H^2 \to H(\mathbb{T})
\end{equation} is continuous.

If $g \in L^p(\mathbb{T}) \setminus L^2({\mathbb{T}})$ for some $p<2$, then the Toeplitz operator $T_g$ is no-longer well defined, since for $f \in H^2(\mathbb{T})$, $gf$ may not lie in $L^1(\mathbb{T})$ and so $P(gf)$ is not well defined. Similarly, when one considers the product $T_hT_{g}$ for $g \in L^2(\mathbb{T})$  if $h$ was not analytic, the product $T_h T_g(f) = P(hP(gf))$ would not be well-defined as $hP(gf)$ will not always lie in $L^1(\mathbb{T})$. In this sense the conditions $u \in H(\mathbb{T})$ and $v \in L^2(\mathbb{T})$ are the most general symbols for which one can characterise boundedness of $T_u T_v$ in Theorem \ref{mainT}.

Although the following application of the Closed Graph Theorem is standard when considering single Toeplitz operators (i.e. not a product of Toeplitz operators), we show the product of two Toeplitz operators is bounded whenever the range of the product is contained in $H^2(\mathbb{T})$.

\begin{Lemma}\label{C}
Let $u \in H(\mathbb{T})$ and $v \in L^2(\mathbb{T})$. Then 
$T_u T_{{v}}$ is a bounded map from $H^2(\mathbb{T})$ to $H^2(\mathbb{T})$ if and only if $\ran \Tilde{T} \subseteq H^{2}(\mathbb{T})$, where $\Tilde{T}$ is given by \eqref{Ttilde}.
\end{Lemma}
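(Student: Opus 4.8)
The plan is to take the forward implication directly from the way $\widetilde T$ realizes the product $T_u T_{v}$, and to obtain the reverse implication from the Closed Graph Theorem, using the continuity of $\widetilde T\colon H^2(\mathbb{T})\to H(\mathbb{T})$ recorded in \eqref{Ttilde} together with the fact that norm convergence in $H^2(\mathbb{T})$ implies convergence in $H(\mathbb{T})$.

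First I would record that the map $\widetilde T$ of \eqref{Ttilde} is the natural realization of the product $T_u T_{v}$: it sends $f\in H^2(\mathbb{T})$ to $T_{v}f=P(vf)\in H^s(\mathbb{T})$ and then to $u\,P(vf)$, and whenever $\ran\widetilde T\subseteq H^2(\mathbb{T})$ the Riesz projection fixes each value $\widetilde T f$ --- because, viewed inside $L^2(\mathbb{T})$, $H^2(\mathbb{T})$ is exactly the set of functions with vanishing negative Fourier coefficients --- so that $\widetilde T f=P(u\,P(vf))$ agrees with the classical expression for $T_u T_{v}f$. With this identification the forward implication is immediate: if $T_u T_{v}$ is a bounded operator $H^2(\mathbb{T})\to H^2(\mathbb{T})$, then its range, which is $\ran\widetilde T$, is contained in $H^2(\mathbb{T})$. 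The substance of the lemma is therefore the converse: assuming $\ran\widetilde T\subseteq H^2(\mathbb{T})$, one must show that $\widetilde T$, regarded as a linear map between the Banach spaces $H^2(\mathbb{T})$ and $H^2(\mathbb{T})$, is automatically bounded.

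For this I would invoke the Closed Graph Theorem, so it remains to check that the graph of $\widetilde T$ is closed. Suppose $f_n\to f$ in $H^2(\mathbb{T})$ and $\widetilde T f_n\to g$ in $H^2(\mathbb{T})$. By the continuity of $\widetilde T\colon H^2(\mathbb{T})\to H(\mathbb{T})$ from \eqref{Ttilde}, $\widetilde T f_n\to\widetilde T f$ uniformly on every compact subset of $\mathbb{D}$. On the other hand, since $H^2(\mathbb{T})$ is a reproducing kernel Hilbert space with kernel $\widetilde{k_x}(z)=(1-\overline x z)^{-1}$, the Cauchy--Schwarz estimate $|h(x)|\leqslant\|h\|_{H^2}\,\|\widetilde{k_x}\|_{H^2}$ together with $\sup_{x\in K}\|\widetilde{k_x}\|_{H^2}<\infty$ for each compact $K\subseteq\mathbb{D}$ shows that $H^2(\mathbb{T})$ embeds continuously into $H(\mathbb{T})$; hence $\widetilde T f_n\to g$ uniformly on every compact subset of $\mathbb{D}$ as well. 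As the topology of $H(\mathbb{T})$ is Hausdorff, the two limits coincide, $g=\widetilde T f$, so the graph of $\widetilde T$ is closed and $\widetilde T=T_u T_{v}$ is bounded.

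The one point that calls for care is the identification of the formally defined composition $\widetilde T$ with the operator product $T_u T_{v}$ --- concretely, that once $u\,P(vf)$ is known to lie in $H^2(\mathbb{T})$ the Riesz projection leaves it unchanged. Granting that, everything reduces to a routine application of the Closed Graph Theorem, the only thing to keep straight being that the two given convergences must first be transported into the common space $H(\mathbb{T})$ before they can be compared there.
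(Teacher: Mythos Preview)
Your proposal is correct and follows essentially the same route as the paper's proof: the forward implication is immediate from the identification $\widetilde T f = T_u T_v f$, and the converse is a Closed Graph Theorem argument comparing the $H^2$ and $H(\mathbb{T})$ limits of $\widetilde T f_n$. If anything, you are more careful than the paper in justifying why $H^2$ convergence implies $H(\mathbb{T})$ convergence and why $P$ fixes $u\,P(vf)$ once it lies in $H^2(\mathbb{T})$.
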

\begin{proof}
When $T_u T_{{v}}$ is a bounded map from $H^2(\mathbb{T})$ to $H^2(\mathbb{T})$, as $\Tilde{T} (f) = T_u T_{\overline{v}}(f)$, clearly $\ran \Tilde{T} \subseteq H^{2}(\mathbb{T})$

Let $\ran \Tilde{T} \subseteq H^2(\mathbb{T})$, then $T_u T_v : H^{2}(\mathbb{T})\to H^2(\mathbb{T})$ is a well-defined map. Let $(f_n)_{n \in \mathbb{N}} \in H^2$ be such that $f_n \overset{H^2}{\to} y_1$ and $T_u T_v (f_n) \overset{H^2}{\to} y_2$. Continuity of $T_u T_v:  H^2(\mathbb{T}) \to H(\mathbb{T})$ implies $T_uT_v (f_n) \overset{H(\mathbb{T})}{\to} T_u T_v (y_1)$, and as $H^2(\mathbb{T})$ convergence is stronger than $H(\mathbb{T})$ convergence, $T_u T_v (f_n) \overset{H(\mathbb{T})}{\to} y_2$, thus $T_u T_v (y_1) = y_2$ and by the Closed Graph Theorem, $T_u T_v : H^2(\mathbb{T}) \to H^2(\mathbb{T})$ is bounded. 
\end{proof}

\section{Poles of $H(\mathbb{}T)$ Functions}\label{3}

\begin{Definition}
    We say that a point $\zeta \in \mathbb{T}$ is a pole of $f \in H(\mathbb{T})$ if for all $\epsilon>0$, $f \notin L^{\infty}(B_{\epsilon}(\zeta) \cap \mathbb{T}).$ For $\overline{v_-} + v_+ = v \in L^2(\mathbb{T})$, where $v_+ \in H^2, \overline{v_-} \in \overline{z H^2(\mathbb{T})}$ the poles of $v$ are defined as the union of the poles of $v_-$ and $v_+$.
\end{Definition}

\begin{Definition}
    For $ f \in H(\mathbb{T})$ with a pole at $\zeta \in \mathbb{T}$, we say $f$ has a simple pole at $\zeta$ if there is a $n \in \mathbb{N}$ such that $(z-\zeta)^n f(z) \in L^{\infty}(\mathbb{T} \cap B_{\epsilon}(\zeta))$ for some $\epsilon >0 $. The smallest such $n \in \mathbb{N}$ such that $(z-\zeta)^n f(z) \in L^{\infty}(\mathbb{T} \cap B_{\epsilon}(\zeta))$ for some $\epsilon > 0$ is the order of the pole.
\end{Definition}


If $f$ is meromorphic on $\mathbb{D}$ with finitely many poles, then for a pole $z_0 \in \mathbb{D}$, $ |f(z)| \rightarrow  \infty$ as $ {z \rightarrow z_0} $. In contrast to this, when we characterise the poles of functions in $H^2(\mathbb{T})$ there may be some oscillatory behaviour of functions as you approach the pole along $\mathbb{T}$. Part $(b)$ in the following proposition demonstrates this oscillatory behaviour on $\mathbb{T}$.

\begin{Proposition}
    \begin{enumerate}
        \item There exist functions in the Hardy space with poles of all orders.
        \item There exist functions $f \in H^2(\mathbb{T})$ such that $f$ is analytic on $\mathbb{T} \setminus \{1\}$ and sequences $b_k,  c_k \in \mathbb{T}^+ : = \{ z \in \mathbb{T} :  0 <\arg z < \pi/2\}$ converging to 1 such that $|b_{k+1} - 1| < |c_k - 1| < |b_{k} - 1|$ for all $k \in \mathbb{N}$, where $f(c_k) =0$,  $f(b_k) \to \infty$.
    \end{enumerate}
\end{Proposition}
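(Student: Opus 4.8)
For part (a) it suffices to exhibit, for each integer $n\ge 1$, one function. Take $f_n(z)=(1-z)^{-(n-1/2)}$ with the principal branch; it is holomorphic in $\mathbb D$, and since $\int_{\mathbb T}|1-\zeta|^{-p(n-1/2)}\,dm(\zeta)<\infty$ for every $p<\tfrac{2}{2n-1}$, one has $f_n\in H^p(\mathbb T)$ for such $p$. Because $|f_n(e^{i\theta})|=(2\sin(|\theta|/2))^{-(n-1/2)}\to\infty$ as $\theta\to0$, the point $1$ is a pole of $f_n$; and $(z-1)^m f_n(z)=(-1)^m(1-z)^{\,m-n+1/2}$ is bounded on $\mathbb T$ near $1$ precisely when $m\ge n$, so the pole at $1$ has order exactly $n$. (If one insists these examples lie in $H^2(\mathbb T)$ itself, they are obtained from the construction of part (b) below after discarding the interpolating zeros and re-tuning the spike sizes.)

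For part (b) I would push the problem to the right half-plane via $\Phi(z)=\tfrac1{1-z}$, a Möbius map carrying $\overline{\mathbb D}\setminus\{1\}$ onto $\{\operatorname{Re}w\ge\tfrac12\}$, the arc $\mathbb T^+$ onto $\{\tfrac12+it:t>\tfrac12\}$, and $z=1$ to $\infty$; under it $f=g\circ\Phi$ lies in $H^2(\mathbb T)$ as soon as $g$ is holomorphic on $\{\operatorname{Re}w\ge\tfrac12\}$ and $\int_{\mathbb R}\frac{|g(\frac12+it)|^2}{1+t^2}\,dt<\infty$ (the usual identification of $H^2(\mathbb D)$ with a weighted Hardy space of the half-plane). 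It is therefore enough to build such a $g$ that vanishes at $\tfrac12+i\sigma_k$ and has $|g(\tfrac12+i\tau_k)|\to\infty$, the heights $\sigma_k,\tau_k\uparrow\infty$ being interlaced. I would take
\[
 g(w)=\prod_{k=1}^{\infty}\frac{w-(\tfrac12+i\sigma_k)}{w-p_k},\qquad p_k=\tfrac12-\eta_k+i\tau_k,\quad \sigma_k=\tau_k-\mu_k,
\]
where $\{\tau_k\}$ is very lacunary and $0<\eta_k\ll\mu_k\ll\tau_k-\tau_{k+1}$, with a further size hierarchy fixed below. Each factor $g_k$ is a \emph{dipole}: uniformly $1+O(\mu_k/|w-p_k|)$ away from height $\tau_k$, equal to $0$ at $\tfrac12+i\sigma_k$, and of modulus $\asymp\mu_k/\eta_k$ at $\tfrac12+i\tau_k$.

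Granting convergence, $g$ is holomorphic on $\{\operatorname{Re}w\ge\tfrac12\}$ since all $p_k$ lie strictly to the left of that line; hence $f=g\circ\Phi$ is holomorphic on $\mathbb D\cup(\mathbb T\setminus\{1\})$, so analytic on $\mathbb T\setminus\{1\}$, and $f(c_k)=0$ with $c_k:=\Phi^{-1}(\tfrac12+i\sigma_k)\in\mathbb T^+$ (the inclusion needs only $\sigma_k>\tfrac12$, true for large $k$). Convergence itself, and the remaining points, rest on the bound $|g_k(w)-1|\lesssim(\mu_k+\eta_k)/\tau_k$ on any compact subset of $\{\operatorname{Re}w\ge\tfrac12\}$, so $\sum_k|g_k-1|$ converges uniformly there once $\sum_k\mu_k/\tau_k<\infty$. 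At $w=\tfrac12+i\tau_k$ we get $|f(b_k)|=|g(\tfrac12+i\tau_k)|=\tfrac{\mu_k}{\eta_k}\prod_{j\ne k}|g_j(\tfrac12+i\tau_k)|$; the product over $j\ne k$ equals $\exp$ of something $\lesssim\sum_{j\ne k}\mu_j/|\tau_k-\tau_j|$, which lacunarity keeps $O(1)$, so $|f(b_k)|\asymp\mu_k/\eta_k\to\infty$ once $\mu_k/\eta_k\to\infty$. Finally $\tau_{k-1}<\sigma_k<\tau_k$ is immediate, and since larger heights on the line correspond under $\Phi^{-1}$ to points closer to $1$, this is (after a relabeling of the two sequences) exactly the asserted interlacing $|b_{k+1}-1|<|c_k-1|<|b_k-1|$, with $c_k,b_k\in\mathbb T^+$ converging to $1$.

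The substantive step is the bound $\int_{\mathbb R}\frac{|g(\frac12+it)|^2}{1+t^2}\,dt<\infty$. Split $\mathbb R$ into short intervals $I_k=(\tau_k-L_k,\tau_k+L_k)$ with $L_k\asymp\mu_k^2/\eta_k$ (forced to be far smaller than the gaps) and the complement $E$. On $E$ one must see that $\prod_k|g_k(\tfrac12+it)|$ stays bounded, and this is where lacunarity is essential: $|\log|g_k(\tfrac12+it)||\lesssim\mu_k/|t-\tau_k|$, so the matter reduces to $\sum_k\mu_k/|t-\tau_k|=O(1)$ uniformly on $E$, which holds once $\{\tau_k\}$ is lacunary enough and $\{\mu_k\}$ is small relative both to the gaps and to $\{\tau_k\}$; then the $E$-part of the integral is $\lesssim\int_{\mathbb R}\frac{dt}{1+t^2}<\infty$. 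On each $I_k$ the integrand is, with $u=t-\tau_k$, comparable to $\frac1{1+\tau_k^2}\cdot\frac{(u+\mu_k)^2}{\eta_k^2+u^2}$, whose integral over $I_k$ is $\asymp\frac1{1+\tau_k^2}(L_k+\mu_k^2/\eta_k)\asymp\frac{\mu_k^2}{\eta_k(1+\tau_k^2)}$, so one needs $\sum_k\frac{\mu_k^2}{\eta_k(1+\tau_k^2)}<\infty$. All the constraints invoked — $\sum\mu_k/\tau_k<\infty$, $\mu_k/\eta_k\to\infty$, $\sum\mu_k^2/(\eta_k\tau_k^2)<\infty$, $\eta_k\ll\mu_k\ll\tau_k-\tau_{k+1}$, lacunarity of $\{\tau_k\}$ — are simultaneously met, e.g. by $\tau_k=2^k$, $\mu_k=\tau_k^{-10}$, $\eta_k=\tau_k^{-15}$. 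Fixing the parameters so this whole hierarchy holds, and making the uniform product bound on $E$ precise, is the only delicate part; the rest is bookkeeping.
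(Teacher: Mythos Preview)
Your approach is genuinely different from the paper's, and for part~(b) it is essentially correct, though considerably more laborious. The paper handles both parts with a single device: a Blaschke product $\theta$ whose zeros accumulate at $1$, and the difference quotient $f(z)=\frac{1-\overline{\theta(1)}\theta(z)}{1-z}$. A result of Cohn characterises exactly when $f\in H^p$ in terms of the zero sequence, so by choosing the zeros to make $f\in H^{2^{-n}}$ one gets $f^n\in H^2$ with a pole of order $n$ at $1$; for part~(b), the monotone unboundedness of $\arg\theta$ on $\mathbb T^+$ immediately produces interlaced sequences where $\theta$ equals $\theta(1)$ (so $f=0$) and $-\theta(1)$ (so $|f|\to\infty$). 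This is much shorter than your half-plane dipole construction, and it buys part~(a) for free in $H^2$. Your construction, by contrast, is self-contained and cites nothing external, which is a genuine advantage if one does not want to invoke Cohn's theorem.

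There is a gap in your part~(a). The functions $(1-z)^{-(n-1/2)}$ lie in $H^p$ only for $p<\tfrac{2}{2n-1}$, hence not in $H^2$ for any $n\ge1$; since the surrounding section and the paper's own proof are about $H^2$, this does not settle the claim as stated. Your parenthetical remark that the part~(b) construction can be ``re-tuned'' to give $H^2$ examples of every order is plausible (indeed, with your parameters $\tau_k=2^k$, $\mu_k/\eta_k=2^{5k}$, $|b_k-1|\asymp 2^{-k}$, one checks the pole has order $5$, and other orders come from other exponents), but you do not carry this out, so part~(a) is not actually proved for $H^2$. For part~(b) your argument is sound: the convergence of the product, the $L^2$ estimate via the $E$/$I_k$ split, and the interlacing (after the relabeling you mention) all go through with the parameter choices you give; the one place to be careful is making the uniform bound $\sum_k\mu_k/|t-\tau_k|=O(1)$ on $E$ fully rigorous, but lacunarity of $\{\tau_k\}$ together with $\mu_k\le\tau_k^{-10}$ handles this with room to spare.
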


\begin{proof}
Let $\theta$ be a Blaschke product with zero set $(a_k)$ converging to the point 1. As $\theta^'$ is unbounded on $\mathbb{T}$, $\theta'$ must be unbounded on $\mathbb{T}^+$ or $\mathbb{T}^- = \{ z \in \mathbb{T} :  -\pi/2 <\arg z < 0\}\}$. We assume WLOG that $\theta'$ is unbounded on $\mathbb{T}^+ $ (since otherwise the inner function $z \mapsto \overline{\theta(\overline{z})}$ will be unbounded on $\mathbb{T}^+$). Then $\arg \theta$ (viewed as a map into $(0, \infty)$) is monotonic and unbounded on $\mathbb{T}^+$. Thus for any $t \in \mathbb{T}$, there exists a sequence $x_k \in \mathbb{T}^+$, converging to 1 such that $\theta(x_k) = t$.

    $(a)$ Consider the function $f(z) = \frac{1 - \overline{\theta(1)} \theta(z)}{1 - z}$. Then \cite{cohn1986radial} shows $f \in H^p(\mathbb{T})$ if and only if $$\sum_k \frac{1-\left|a_k\right|^2}{\left|1-a_k\right|^p} < \infty.$$ By picking $a_k$ such that the above holds for $p = 2^{-n}$ this guarantees $\theta(1)$ is unimodular (see for example \cite{ahern1970radial}) and since $f \in H^{2^{-n}}(\mathbb{T})$, we have $f^n = \left( \frac{1 - \overline{\theta(1)} \theta(z)}{1 - z}\right)^n \in H^2(\mathbb{T})$ and $(z-1)^n f^n(z) = (1 - \overline{\theta(1)} \theta(z)) \in H^{\infty}(\mathbb{T})$. For any $i < n$ the function $(z-1)^i f^n(z) = \frac{(1 - \overline{\theta(1)} \theta(z))^n}{(1 - z)^{n-i}} \notin L^{\infty}(\mathbb{T} \cap B_{\epsilon}(1))$ for any $\epsilon>0$ since there is a sequence of points $b_k$ on $\mathbb{T}^+$ converging to 1 such that $\theta(b_k) = - \overline{\theta(1)}$. Thus $f^n$ has a pole of order $n$ at the point 1.

    $(b)$ Let $c_k \in \mathbb{T}^+$ be a sequence converging to 1 such that $\theta(c_k) = \overline{\theta(1)}$. By passing to a subsequence of $b_k$ and $c_k$ if necessary we see $f(z) = \frac{1 - \overline{\theta(1)} \theta(z)}{1 - z}$ has the specified properties.
\end{proof}

The following example shows we do not always have oscillatory behaviour of $f \in H^2(\mathbb{T})$ as we approach its pole.

\begin{Example}
    $f(z) = \frac{1}{(1-z)^{1/3}} \in H^2(\mathbb{T})$ and has a pole of order 1 at 1.
\end{Example}





In fact for $f \in H^2(\mathbb{T})$, the following proposition shows boundedness of $(z- \zeta)^n f(z)$ on some open subset of $\mathbb{T}$ containing $\zeta$ is equivalent to $(z- \zeta)^n f(z)$ not oscillating as you approach $\zeta$ along $\mathbb{T}$. In the following, we use the notation $B_{\epsilon}^+(\zeta) := B_{\epsilon}(\zeta)  \cap \{ z \in \mathbb{T} :  \arg \zeta < \arg z < \arg \zeta + \pi/2  \}$.


\begin{Proposition}
    Let $f \in H^2(\mathbb{T})$ have a pole at $\zeta \in \mathbb{T}$ and be well-defined on $B_{\epsilon}^+(\zeta) \setminus \{\zeta \}$ and let $n \in \mathbb{N}$. Then $(z- \zeta)^n f(z) \notin L^{\infty}(B_{\epsilon}^+(\zeta))$ for all $\epsilon >0$ if and only if 
    there exists sequences $x_k, y_k \in B_{\epsilon}^+(\zeta)$ such that  $x_k \rightarrow \zeta$, $y_k \rightarrow \zeta$ with $|x_{k+1} - \zeta| < |y_k - \zeta| < |x_k - \zeta|$ for every $k$ and $(x_k- \zeta)^n f(x_k) \rightarrow 0$, $(y_k- \zeta)^n f(y_k) \rightarrow \infty$. 
\end{Proposition}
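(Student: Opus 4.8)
The implication ``$\Leftarrow$'' is immediate: if sequences $x_k,y_k$ as in the statement exist, then for every $\epsilon'>0$ the points $y_k$ eventually lie in $B_{\epsilon'}^+(\zeta)$ while $|(y_k-\zeta)^nf(y_k)|\to\infty$, so $(z-\zeta)^nf$ cannot be (essentially) bounded on $B_{\epsilon'}^+(\zeta)$. For the direction ``$\Rightarrow$'' I would, after rotating so that $\zeta=1$, set $\Phi(z):=\bigl|(z-\zeta)^nf(z)\bigr|$ on the arc where $f$ is defined, and reduce the construction to two one-sided facts, then interlace. The first fact just restates the hypothesis: for every $\epsilon'>0$, $\Phi$ is essentially unbounded on $B_{\epsilon'}^+(\zeta)$, so (as these arcs shrink to $\zeta$) for every $M$ and every $\delta>0$ the set $\{z\in B_\delta^+(\zeta):\Phi(z)>M\}$ has positive measure. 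The second fact is that the lower limit of $\Phi$ at $\zeta$ along $B_\epsilon^+(\zeta)$ vanishes --- precisely, that for every $\eta>0$ and $\delta>0$ the set $\{z\in B_\delta^+(\zeta):\Phi(z)<\eta\}$ has positive measure.

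This second fact is the crux, and it is here that both $f\in H^2(\mathbb{T})$ and $n\geq1$ are essential. Suppose it failed; then there would be $c>0$ and $\delta>0$ with $\Phi(z)\geq c$, i.e.\ $|f(z)|\geq c\,|z-\zeta|^{-n}$, for almost every $z\in B_\delta^+(\zeta)$. Writing $z=\zeta e^{it}$ and using $|z-\zeta|=|e^{it}-1|\leq|t|$, one gets, for a suitable $\delta'>0$ (so that $\{\zeta e^{it}:0<t<\delta'\}\subseteq B_\delta^+(\zeta)$),
$$\int_\mathbb{T}|f|^2\,dm\ \geq\ \int_{B_\delta^+(\zeta)}|f|^2\,dm\ \geq\ c^2\int_{B_\delta^+(\zeta)}\frac{dm(z)}{|z-\zeta|^{2n}}\ \geq\ \frac{c^2}{2\pi}\int_0^{\delta'}\frac{dt}{t^{2n}}\ =\ \infty,$$
the last integral diverging because $2n\geq2>1$. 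This contradicts $f\in H^2(\mathbb{T})$, proving the second fact. (This is also where $n=0$ would fail the statement: an outer function with $|f(e^{i\theta})|\sim|\theta|^{-1/4}$ near $\theta=0$ lies in $H^2$ and is unbounded at $\zeta=1$, yet has monotone modulus and hence does not oscillate.)

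With both facts in hand I would build the sequences recursively, shrinking the radius at each stage. Choose $x_1\in B_\epsilon^+(\zeta)$ with $\Phi(x_1)<1$; then, using essential unboundedness of $\Phi$ on $B_{|x_1-\zeta|}^+(\zeta)$, choose $y_1$ there with $\Phi(y_1)>1$, so $|y_1-\zeta|<|x_1-\zeta|$; then choose $x_2\in B_{|y_1-\zeta|}^+(\zeta)$ with $\Phi(x_2)<\tfrac12$; and in general, at the $k$th stage, choose $x_k$ with $\Phi(x_k)<1/k$ and $|x_k-\zeta|<1/k$ inside the previous ball, then $y_k$ with $\Phi(y_k)>k$ inside $B_{|x_k-\zeta|}^+(\zeta)$. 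Each such choice is legitimate because the relevant sublevel and superlevel sets of $\Phi$ meet every ball $B_\delta^+(\zeta)$ in a set of positive measure. By construction $|x_{k+1}-\zeta|<|y_k-\zeta|<|x_k-\zeta|$, the radii tend to $0$ so $x_k,y_k\to\zeta$, and $(x_k-\zeta)^nf(x_k)\to0$, $(y_k-\zeta)^nf(y_k)\to\infty$, as required. The main obstacle is the vanishing-lower-limit fact: once one sees that $(z-\zeta)^nf$ being bounded away from $0$ near $\zeta$ forces $|f|^2\gtrsim|z-\zeta|^{-2n}$, which is non-integrable for $n\geq1$, the rest is bookkeeping. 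The only point needing care is that $L^\infty$ means essential boundedness, so the selections must be drawn from positive-measure level sets of $\Phi$ (legitimate since $f\in L^2(\mathbb{T})$), and this is also what makes the ``$\Leftarrow$'' conclusion precise; if one additionally knows $f$ is continuous on the relevant arc punctured at $\zeta$, as for the functions constructed earlier in this section, the word ``essential'' may be dropped throughout.
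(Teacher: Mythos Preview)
Your proof is correct and follows essentially the same route as the paper: the backward implication is immediate, and for the forward direction the crux is exactly your ``second fact''---that $(z-\zeta)^n f(z)$ cannot be bounded away from $0$ near $\zeta$, since otherwise $|f|^2 \gtrsim |z-\zeta|^{-2n}$ would fail to be integrable, contradicting $f\in H^2(\mathbb{T})$. The paper then obtains the sequences $x_k,y_k$ separately and interlaces by passing to subsequences, whereas you build the interlaced sequences directly by a recursive shrinking-radius argument; this is a minor presentational difference. Your extra care about essential boundedness and positive-measure level sets is not needed here because the hypothesis ``$f$ is well-defined on $B_\epsilon^+(\zeta)\setminus\{\zeta\}$'' permits pointwise reasoning (the paper addresses the a.e.\ situation in the subsequent Remark), but it does no harm.
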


\begin{proof}
    The backward implication is immediate, so we prove the forward implication. 
    Suppose for contradiction that there are no sequences $x_k \in B_{\epsilon}^+(\zeta)$ such that $x_k \rightarrow \zeta$ and $(x_k- \zeta)^n f(x_k) \rightarrow 0$. Then there exists $0 < c, \, 0< \delta < \epsilon $ such that $c \leqslant|f(z) ( \zeta - z)^n|$ on $B_{\delta}^+(\zeta)$, so
    $$
    \infty = c \int_{B_{\delta}^+(\zeta) }\frac{1}{|1-\overline{\zeta}z|^{2n}} \mathrm{d}m= c \int_{B_{\delta}^+(\zeta)}\frac{1}{|\zeta-z|^{2n}} \mathrm{d}m \leqslant  \int_{B_{\delta}^+(\zeta)} |f(z)|^2 \mathrm{d}m < \infty.
    $$
    
Thus, such a sequence $x_k$ must exist and clearly as $(z- \zeta)^n f(z) \notin L^{\infty}(B_{\epsilon}^+(\zeta))$ a sequence $y_k$ with the specified properties exists. The result now follows by passing to a subsequence of both $x_k$ and $y_k$ if necessary. 
\end{proof}

\begin{Remark}
 In the proposition above, we may omit the condition that $f$ is well defined on $B_{\epsilon}^+(\zeta) \setminus \{ \zeta \}$ if we work with disjoint open subarcs $X_k, Y_k \subseteq B_{\epsilon}^+(\zeta) \setminus \{ \zeta \}$ such that $\operatorname{dist}( X_k , \zeta) \rightarrow 0$, $\operatorname{dist}( Y_k , \zeta) \rightarrow 0$, $\|(z- \zeta)^n f (z)\|_{L^{\infty}(Y_k)} \rightarrow \infty, \, \|(z- \zeta)^n f (z)\|_{L^{\infty}(X_k)} \rightarrow 0$ instead of sequences $x_k , y_k \in \mathbb{T}$.
\end{Remark}

It may be tempting to believe that all poles of functions in $H^2(\mathbb{T})$ are simple, or that every function in $H^2(\mathbb{T})$ only has finitely many poles. However, the following proposition shows this is not the case. 

\begin{Proposition}
\begin{enumerate}
    \item There exists a function $f \in H^2 ( \mathbb{T})$ with a pole at $1$ such that for all $n \in \mathbb{N}$, $f(z) (z-1)^n \notin L^{\infty}(B_{\epsilon}(1) \cap \mathbb{T})$ for all $\epsilon >0$.
    \item There exists functions in $H^2(\mathbb{T})$ with infinitely many poles on $\mathbb{T}$.
\end{enumerate}
    
\end{Proposition}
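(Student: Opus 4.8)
The plan is to establish part (2) first and then obtain part (1) for free by arranging the infinitely many poles to accumulate at $1$. The building block is the function $h(z) := (1-z)^{-1/3}$ from the Example above, which lies in $H^2(\mathbb{T})$ and has a pole at $1$; for $\zeta \in \mathbb{T}$ let $e_\zeta(z) := h(\overline{\zeta}z)$ be its rotation, so $e_\zeta \in H^2(\mathbb{T})$ with $\|e_\zeta\|_{H^2} = \|h\|_{H^2}$, $e_\zeta$ has a pole at $\zeta$, and $|e_\zeta(z)| = |\zeta - z|^{-1/3}$. Fix distinct points $\zeta_k \in \mathbb{T}$ ($k \in \mathbb{N}$) whose only accumulation point is $1$ --- for concreteness $\zeta_k := e^{i/k}$ --- and set
\[
 f := \sum_{k=1}^{\infty} 2^{-k}\, e_{\zeta_k}.
\]
Since all the norms $\|e_{\zeta_k}\|_{H^2}$ are equal, the series converges absolutely in $H^2(\mathbb{T})$, so $f \in H^2(\mathbb{T}) \subseteq H(\mathbb{T})$; moreover $\sum_k 2^{-k}|e_{\zeta_k}(\zeta)| = \sum_k 2^{-k}|\zeta_k - \zeta|^{-1/3}$ has finite integral over $\mathbb{T}$, so the series also converges absolutely almost everywhere and $f = \sum_k 2^{-k} e_{\zeta_k}$ a.e.\ on $\mathbb{T}$.

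The crux is to show each $\zeta_k$ is genuinely a pole of $f$. Because $(\zeta_j)$ accumulates only at $1$, the point $\zeta_k$ is isolated from the rest of the sequence, so $\delta_k := \tfrac12\inf_{j \neq k}|\zeta_j - \zeta_k| > 0$. For $z \in B_{\delta_k}(\zeta_k) \cap \mathbb{T}$ and every $j \neq k$ we have $|\zeta_j - z| \geqslant \delta_k$, hence $\big|\sum_{j \neq k} 2^{-j} e_{\zeta_j}(z)\big| \leqslant \delta_k^{-1/3}$; thus on $B_{\delta_k}(\zeta_k) \cap \mathbb{T}$ the function $f$ agrees, a.e., with $2^{-k} e_{\zeta_k}$ up to a bounded term, and since $2^{-k}|\zeta_k - z|^{-1/3} \to \infty$ as $z \to \zeta_k$ we conclude $f \notin L^{\infty}(B_{\epsilon}(\zeta_k) \cap \mathbb{T})$ for every $\epsilon > 0$. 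Hence $f$ has a pole at each of the infinitely many points $\zeta_k$, which proves (2). For (1) take the same $f$: each $B_{\epsilon}(1)$ contains some $\zeta_k$ at which $f$ is unbounded, so $1$ is a pole of $f$; and given $n \in \mathbb{N}$ and $\epsilon > 0$, pick $k$ with $\zeta_k \in B_{\epsilon}(1)$ and $\zeta_k \neq 1$ and a subarc $U \subseteq B_{\epsilon}(1) \cap \mathbb{T}$ containing $\zeta_k$ with $1 \notin \overline{U}$; on $U$ the factor $(z-1)^n$ is bounded and bounded away from $0$, so $(z-1)^n f(z)$ is unbounded on $U$ and therefore $f(z)(z-1)^n \notin L^{\infty}(B_{\epsilon}(1) \cap \mathbb{T})$. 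This proves (1).

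The one step requiring real care is the one just described: verifying that superimposing countably many single-pole functions with summable coefficients does not smear out the individual poles; this is precisely where the isolatedness of each $\zeta_k$ from the rest of the sequence, together with the explicit identity $|e_{\zeta_j}(z)| = |\zeta_j - z|^{-1/3}$, is used. I expect this to be the main obstacle, though a fairly mild one --- the remaining points (absolute convergence in $H^2$, membership in $H(\mathbb{T})$, and the fact that multiplying by $(z-1)^n$ cannot kill a pole located away from $1$) are routine. If one instead insisted, for (1), on a function whose infinite-order pole at $1$ is genuinely concentrated there rather than assembled from nearby poles --- say one of the form $\sum_n c_n \big(\tfrac{1-\overline{\theta(1)}\theta(z)}{1-z}\big)^n$ built from a single Blaschke product $\theta$ as in an earlier Proposition of this section --- the difficulty would shift to controlling the infinitely many subarcs near $1$ on which the successive powers blow up (so as to rule out cancellation in the tail while keeping the $H^2$ norm finite); the accumulating-poles construction avoids this complication altogether.
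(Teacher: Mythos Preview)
Your proof is correct, and it takes a different route from the paper's. The paper proceeds indirectly: for each part it first builds an $L^2(\mathbb{T})$ function with the required unboundedness (for (a), a step function taking value $k!$ on arcs of length $\tfrac{1}{k!k}$ centred at points $x_k$ with $|x_k-1|=1/k$; for (b), a sum of disjointly supported $L^2$ pieces each with a pole), and then writes $f=\overline{f_-}+f_+$ and argues by pigeonhole that at least one of $f_\pm\in H^2(\mathbb{T})$ must inherit the property. By contrast you work entirely inside $H^2(\mathbb{T})$, using rotations of the explicit Hardy function $(1-z)^{-1/3}$ and exploiting the exact identity $|e_{\zeta_j}(z)|=|\zeta_j-z|^{-1/3}$ to bound the tail near each $\zeta_k$ and thereby rule out cancellation. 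Your argument is more constructive --- it produces a single explicit $H^2$ function witnessing both parts simultaneously --- and avoids the non-constructive ``either $f_+$ or $f_-$'' step in the paper. The paper's approach, on the other hand, makes it clear that the phenomenon is robust: any $L^2$ function with the desired essential unboundedness automatically passes the property to one of its analytic pieces, so no special Hardy-space building block is needed. One minor conceptual difference worth noting is that in your construction the ``infinite-order'' pole at $1$ in part (1) is an accumulation point of other poles rather than a single point at which $f$ itself blows up faster than every $(z-1)^{-n}$; this is entirely consistent with the paper's definitions, but a reader expecting an isolated non-simple pole might find the paper's $L^2$ example (where the large values are genuinely concentrated at $1$, not at nearby points) closer to that intuition.
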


\begin{proof}
    \begin{enumerate}
        \item Let $\epsilon > 0$ and $n \in \mathbb{N}$. For $f \in L^2(\mathbb{T})$ 
        since $ f = \overline{f_-}  + f_+$ where $f_+ \in H^2(\mathbb{T}), \, \overline{f_-}  =  \overline{z H^2(\mathbb{T})}$, if $f(z) (z-1)^n \notin L^{\infty}(B_{\epsilon}(1) \cap \mathbb{T})$ this forces either $f_+(z) (z-1)^n \notin L^{\infty}(B_{\epsilon}(1) \cap \mathbb{T})$  or $f_-(z) (z-1)^n \notin L^{\infty}(B_{\epsilon}(1) \cap \mathbb{T})$. Thus if  $f(z) (z-1)^n \notin L^{\infty}(B_{\epsilon}(1) \cap \mathbb{T})$ for every $n \in \mathbb{N}$, there is an infinite sequence $(n_j) \subseteq  \mathbb{N}$, such that either $f_+(z) (z-1)^{n_j} \notin L^{\infty}(B_{\epsilon}(1) \cap \mathbb{T})$ for all $j$, or $f_-(z) (z-1)^{n_j} \notin L^{\infty}(B_{\epsilon}(1) \cap \mathbb{T})$ for all $j$. If for all $j$, $f_+(z) (z-1)^{n_j} \notin L^{\infty}(B_{\epsilon}(1) \cap \mathbb{T})$, this implies for all $n \in \mathbb{N}$, $f_+(z) (z-1)^{n} \notin L^{\infty}(B_{\epsilon}(1) \cap \mathbb{T})$. Similarly, if $f_-(z) (z-1)^{n_j} \notin L^{\infty}(B_{\epsilon}(1) \cap \mathbb{T})$, then $f_-(z) (z-1)^{n} \notin L^{\infty}(B_{\epsilon}(1) \cap \mathbb{T})$.

    To construct such an $f \in L^2(\mathbb{T})$ for $k \in \mathbb{N}$ $k>2$ we set $ x_k \in \mathbb{T}$ such that $|x_k -1| = \frac{1}{k}$, we set $X_k \subseteq \mathbb{T}$ to be an arc of $\mathbb{T}$ centred at $x_k$ with $m (X_k) = \frac{1}{k!k}$. Set $f$ to take value $k!$ on $X_k$ and be zero elsewhere. Then since $X_i \cap X_j$ is empty for $i \neq j$ we have $\|f\|_{L^2(\mathbb{T})} = \sum_{k>2} \left( \frac{1}{k} \right)^2 < \infty$ but for every $n \in \mathbb{N}$, we have $|f(x_k) (x_k - 1)^n| = \frac{k!}{k^n} \rightarrow \infty$ as $k \rightarrow \infty$. 
    \item Writing $f = \overline{f_-} + f_+$ shows if there exists an $f \in L^2(\mathbb{T})$ such that for infinitely many $t_i \in \mathbb{T}$, we have $f \notin L^{\infty}(B_{\epsilon}(t_i) \cap \mathbb{T})$ for all $\epsilon >0$, then there are functions in $H^2(\mathbb{T})$ with infinitely many poles on $\mathbb{T}$. To construct $f \in L^2(\mathbb{T})$ with this property, for $k \in \mathbb{N}$ consider a sequence of disjoint subsets $X_k \subseteq \mathbb{T}$ and functions $f_k : X_k \to \mathbb{C} $ defined a.e. on $X_k$ with a pole at a point $x_k \in X_k$ such that $\|f_k\|_{L^2(X_k)} = \frac{1}{k^2}$. Then $f(z) = \sum_{k \in \mathbb{N}}f_k (z) \in L^2(\mathbb{T})$ has the required property.
    \end{enumerate}  
\end{proof}

\section{Boundedness of Products of Toeplitz Operators}\label{4}

\begin{Definition}\label{ATP}
    We say $u \in H(\mathbb{T}), v \in L^2(\mathbb{T})$ are an admissible Toeplitz pair if the following conditions hold.
    \begin{enumerate}
        \item Both $u,v$ have finitely many poles, with each pole being simple.
        \item $u$ is analytic at the poles of $v$.
        \item When $v$ is written as $v = \overline{v_-} + v_+$ with $\overline{v_-} \in \overline{z H^2(\mathbb{T})}, \, v_+ \in H^2(\mathbb{T})$, both $v_-$ and $v_+$ are analytic at the poles of $u$.
        \item One of the following holds: \begin{itemize}
            \item[(1)] $v \in \overline{H^2(\mathbb{T})} $,
           \item[(2)] or for all $t \in \mathbb{T}$ which is a pole of $u$ of order $j$, for all $ 0 \leqslant i <j$ whenever $\frac{v(z)}{(z-t)^{i}} (t) = 0$ we have $\frac{v_+(z)}{(z-t)^i}(t) =  \frac{\overline{v_-(z)}}{(z-t)^i}(t) = 0$.
        \end{itemize}  
    \end{enumerate}
\end{Definition}
We remark that in the context of Question \ref{Q} (i.e. when $u,\overline{v} \in H^2(\mathbb{T})$) condition $(d)$ of Definition \ref{ATP} automatically holds.

\begin{Theorem}\label{mainT}
    Let $u \in H(\mathbb{T}), v \in L^2(\mathbb{T})$ be an admissible Toeplitz pair. Then $T_u T_{{v}}$ is bounded on $H^2(\mathbb{T})$ if and only if $u {v} \in L^{\infty}(\mathbb{T})$.
\end{Theorem}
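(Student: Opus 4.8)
The plan is to prove both directions, with the non-trivial one being that boundedness of $T_u T_{\overline v}$ forces $uv \in L^\infty(\mathbb{T})$; the converse (if $uv \in L^\infty$ then $T_u T_{\overline v}$ is bounded) should follow from the factorization $T_u T_{\overline v} = T_{uv} + (\text{something bounded})$, or more directly by checking $\ran \tilde T \subseteq H^2(\mathbb{T})$ via Lemma \ref{C}: if $uv$ is bounded on $\mathbb{T}$ then for $f \in H^2$ one writes $u\,P(\overline v f)$ and controls it using that $u$ is analytic at the poles of $v$ and $v_\pm$ analytic at the poles of $u$, so no pole of one symbol collides with a pole of the other and the pointwise product stays in $L^2$. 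I would treat the converse first since it is the softer half.

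For the forward direction, assume $T_u T_{\overline v}$ is bounded but $uv \notin L^\infty(\mathbb{T})$, and derive a contradiction by exhibiting test functions on which the operator blows up. Since $u,v$ each have only finitely many \emph{simple} poles and $u$ is analytic at the poles of $v$ (and vice versa for $v_\pm$), the product $uv$ can only fail to be bounded at a point $t \in \mathbb{T}$ that is a pole of $u$ \emph{or} of $v$ (or where the boundary values are genuinely unbounded in the $H(\mathbb{T})$ sense); the real content is concentrated near such a $t$. I would localize: pick a small arc around $t$, and build a sequence of unit-norm test functions $f_k \in H^2(\mathbb{T})$ supported (in the sense of reproducing-kernel concentration) near $t$ — natural candidates are normalized reproducing kernels $\tilde k_{x_k}/\|\tilde k_{x_k}\|$ with $x_k \to t$ radially, or the normalized functions $(1-\overline{x_k}z)^{-\alpha}$ tuned to the pole order. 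The idea is that $T_{\overline v} f_k$ concentrates the singular behaviour of $v$, then multiplying by $u$ concentrates the singular behaviour of $u$, and the two singularities \emph{multiply} rather than cancel — this is exactly where condition (d) is needed: it rules out the degenerate case where a zero of $v$ at the pole $t$ of $u$ could be split unequally between $v_+$ and $\overline{v_-}$ in a way that allows $P$ to kill the singular part. Using the pole-analysis of Section \ref{3} — in particular the Proposition characterizing when $(z-t)^n f \notin L^\infty$ via oscillating sequences $x_k, y_k$ — I would show $\|T_u T_{\overline v} f_k\|_{H^2} \to \infty$ while $\|f_k\|_{H^2} = 1$, contradicting boundedness.

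The main obstacle, and where I expect to spend the most effort, is handling the interaction between the Riesz projection $P$ and the pole of $v$: one cannot simply say $P(\overline v f_k) \approx \overline v f_k$ near $t$, because $P$ is nonlocal and $\overline v f_k$ need only be in $H^1$ (or $H^s$), so I must argue that the singular contribution of $\overline v f_k$ near $t$ survives projection — essentially that $P$ of a function with a controlled simple pole at $t$ again has a pole of the same order at $t$, which is where the decomposition $v = \overline{v_-} + v_+$ into $\overline{zH^2}$ and $H^2$ parts and the analyticity hypotheses in conditions (b),(c) pay off (the projection interacts cleanly with each summand). A secondary technical point is the case $v \in \overline{H^2(\mathbb{T})}$ in condition (d)(1): there $v_+ = v(0)$ is constant, $T_{\overline v} = T_v^*$ acts more transparently, and the argument simplifies, so I would dispatch that case separately and then reduce case (d)(2) to it after peeling off the analytic parts. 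Throughout, the Carleson-measure reformulation hinted at in the introduction — that boundedness is equivalent to a weighted measure being Carleson for $\ran$ of a Toeplitz operator — gives a useful sanity check on the test-function construction, but I would carry out the estimates directly in $H^2$ to keep the proof self-contained.
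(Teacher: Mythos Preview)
You have the two directions reversed in difficulty, and this misallocation is the real gap in the proposal.

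The implication ``$T_uT_v$ bounded $\Rightarrow uv\in L^\infty$'' is the \emph{soft} half. No contradiction argument or pole analysis is needed: for the normalised reproducing kernel $k_x$ one has $T_{\overline{v_-}}k_x=\overline{v_-(x)}\,k_x$ and $T_{v_+}k_x=v_+k_x$, so $\langle T_uT_v k_x,k_x\rangle = u(x)v(x)$, and pairing against $k_x$ gives $|u(x)v(x)|\le \|T_uT_v\|$ for every $x\in\mathbb{D}$. Your elaborate plan with oscillating sequences from Section~\ref{3} is aimed at a door that is already open. In particular, condition~(d) of Definition~\ref{ATP} plays no role here.

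The implication ``$uv\in L^\infty \Rightarrow T_uT_v$ bounded'' is where the work lies, and your one-sentence sketch (``the pointwise product stays in $L^2$'') is not a proof. The suggested factorisation $T_uT_v=T_{uv}+(\text{bounded})$ is not available: the identity $T_\phi T_\psi=T_{\phi\psi}$ holds when the \emph{left} factor is co-analytic, but here $u$ is analytic, and in any case the symbols are unbounded so the Hankel correction term is not obviously bounded. The route via Lemma~\ref{C} is correct, but one must actually show $uP(vf)\in L^2(\mathbb{T})$ for every $f\in H^2$, and the difficulty is that $P$ is nonlocal while $vf$ is only in $L^1$. The paper handles this by multiplying $vf$ by the polynomial $\prod_i(z-s_i)^{n_i}$ (clearing the poles of $v$) so that the product lies in $L^2$; projecting then gives $\prod_i(z-s_i)^{n_i}\,P(vf)$ up to a polynomial correction, and one reads off the behaviour of $uP(vf)$ near each pole separately. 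Condition~(d) is used precisely in \emph{this} direction: $uv\in L^\infty$ forces $v$ to vanish (to the right order) at each pole $t_i$ of $u$, and (d)(2) guarantees that this zero is inherited by \emph{both} $v_+$ and $v_-$, which is what makes the estimate near $t_i$ go through after a second polynomial-clearing step. Your intuition about what (d) prevents is right, but you have placed it in the wrong half of the argument.
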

We prove the result in the case $(d) (2)$ holds. When $(d) (1)$ holds, one can make clear simplifications of the following proof.
\begin{proof}
We extend the argument from \cite[Problem 7.9]{sarasonprobbook} to show the forward implication. \cite[Corollary 5.22]{paulsen2016introduction} shows $T_{\overline{v_-}}(k_x)= \overline{v_-(x)}k_x$. When $\|T_u T_{{v}}\| \leqslant M$, for all normalised reproducing kernels $k_x \in H^2(\mathbb{T})$,
$$
|u(x){v}(x)| = |\langle  (T_{\overline{v_-}}(k_x) + v_+k_x, k_x>| =   |u(x)||\langle  T_{v} k_x, k_x>|  = |\langle T_u T_{v} k_x, k_x>| \leqslant M.
$$

Let $uv \in L^{\infty}(\mathbb{T})$. Let $E := \{ t_1, t_2, \ldots t_{N} \}$ (respectively $F := \{ s_1, s_2, \ldots s_{M} \}$) be the poles of $u$ (respectively $v$). By Lemma \ref{C} $T_{u}T_{{v}}$ is bounded if and only if for some $ \epsilon >0$ and all $f \in H^2(\mathbb{T})$
    \begin{equation}\label{main}
        \int_{\mathbb{T} \setminus B_{\epsilon}(E \cup F)}|uT_{v}(f)|^2 \mathrm{d}m + \int_{ B_{\epsilon}(F) \cap \mathbb{T}}|uT_{v}(f)|^2 \mathrm{d}m + \int_{ B_{\epsilon}(E ) \cap \mathbb{T}}|uT_{v}(f)|^2 \mathrm{d}m < \infty.
    \end{equation}

As $u {v} \in L^{\infty}$ the poles of $u$ and poles of ${v}$  must be distinct. Let $k_i$ (respectively $n_i$) be the order of the pole $t_i$ (respectively $s_i)$. As $u {v} \in L^{\infty}(\mathbb{T})$ this forces $u(s_i) = 0$, and as $u$ is analytic at $s_i$, $ \frac{u}{z-s_i}$ is analytic at $s_i$. If $n_i >1$, then $\frac{u}{z-s_i} {v} (z-s_i) \in L^{\infty}(\mathbb{T})$, which means $\frac{u}{z-s_i}$ has a zero at $s_i$. Recursively we see $\frac{u}{(z-s_i)^{n_i}}$ is  analytic at $s_i$. Similarly for each $t_i$, $uv \in L^{\infty}(\mathbb{T})$ means $v(t_i) = 0$, thus condition $(d)(2)$ of Definition \ref{ATP} implies $\frac{v_+}{z-t_i}$ and $\frac{v_-}{z-t_i}$ are analytic at $t_i$. If $k_i > 1$ we recursively deduce $\frac{v_+}{(z-t_i)^{k_i}}$ and $\frac{v_-}{(z-t_i)^{k_i}}$ are analytic at $t_i$.

Pick $\epsilon$ sufficiently small such that \begin{enumerate}
    \item $B_{\epsilon}(E \cup F)$ consists of finitely many non intersecting balls.
    \item At each $t_i \in E$, both $\frac{v_+}{(z-t_i)^{k_i}} \in L^{\infty}(B_{\epsilon}(t_i) \cap \mathbb{T})$ and $\frac{v_-}{(z-t_i)^{k_i}} \in L^{\infty}(B_{\epsilon}(t_i) \cap \mathbb{T})$ and at each $s_i \in F$, $ \frac{u}{(z-s_i)^{n_i}} \in L^{\infty}(B_{\epsilon}(s_i) \cap \mathbb{T})$.
    \item At each $t_i \in E$, we have $u(z) ( z-t_i)^{k_i} \in L^{\infty}(B_{\epsilon}(t_i) \cap \mathbb{T})$ and for each $s_i \in F$ we have $v(z) ( z-s_i)^{n_i} \in L^{\infty}(B_{\epsilon}(s_i) \cap \mathbb{T})$.
\end{enumerate} 

We now show the first integral in \eqref{main} is finite. We have $v \prod_{i \in F} (z-s_i)^{n_i} \in L^{\infty}(\mathbb{T})$, so for $f \in H^2(\mathbb{T})$, $f {v} \prod_{i \in F} (z-s_i)^{n_i} \in L^2(\mathbb{T})$, and so \begin{equation}\label{R1}
    P(f {v} \prod_{i \in F} (z-s_i)^{n_i})  = h
\end{equation} for some $h \in H^2(\mathbb{T})$. For $n \in \mathbb{N}$, observe $P(f {v} z^n) = {z}^n P({v}f) + {q_n}$ where $q_n$ is a polynomial of order $n$. Consequently \eqref{R1} becomes
\begin{equation}\label{Rbetter}
    q + \prod_{i \in F}(  (z-s_i)^{n_i}) P( {v}f) = P(f  {v} \prod_{i \in F}(  (z-s_i)^{n_i}) ) = h,
\end{equation}
where $q$ is a polynomial. Thus
\begin{equation}\label{R3}
    P( {v}f) = \frac{h-q}{\prod_{i \in F}(  (z-s_i)^{n_i}) }
\end{equation}
which lies in $L^2(\mathbb{T}\setminus B_{\epsilon}( E \cup F))$ since the numerator is in $L^2(\mathbb{T})$ and the denominator is bounded on $\mathbb{T}\setminus B_{\epsilon}(  E \cup F)$. So $$\int_{\mathbb{T} \setminus B_{\epsilon}(E \cup F)}|uT_{v}(f)|^2 \mathrm{d}m < \sup_{\mathbb{T} \setminus B_{\epsilon}(E \cup F)}|u(z)|^2  \int_{\mathbb{T} \setminus B_{\epsilon}(E \cup F)}|T_{v}(f)|^2 \mathrm{d}m < \infty.$$


As $F$ is finite, in order to show the second integral in \eqref{main} is finite it suffices to prove that for each $f \in H^2(\mathbb{T})$ and $s_i \in F$ we have $\int_{B_{\epsilon}(s_i)\cap \mathbb{T}}|uT_{v}(f)|^2 \mathrm{d}m< \infty$. From \eqref{R3},
we have
$$
uP(\overline{v}f) =\frac{u}{(z-s_i)^{n_i}} \frac{h-q}{\prod_{j \neq i }(z-s_j)^{n_j}}
$$
which must lie in $L^2(B_{\epsilon}(s_i)\cap \mathbb{T})$ since $\frac{u}{(z-s_i)^{n_i}}, \,  \frac{1}{\prod_{j \neq i }(z-s_j)^{n_j}} \in L^{\infty}(B_{\epsilon}(s_i)\cap \mathbb{T})$.

In order to prove $\int_{ B_{\epsilon}(E )}|uT_{v}(f)|^2 \mathrm{d}m < \infty$ we show $\int_{ B_{\epsilon}(s_i )}|uT_{v}(f)|^2  \mathrm{d}m< \infty$ for $s_i \in E$. 
Observe that \begin{equation}\label{R5}
    \frac{v}{\prod_{i \in F}(z-t_i)^{k_i}}\prod_{j \in F}({z}- {s_j})^{n_j}
\end{equation}
is bounded a.e. on $ B_{\epsilon}(E) \cap \mathbb{T}$ since $\frac{v}{\prod_{i \in F}(z-t_i)^{k_i}}$ and $\prod_{j \in F}({z}- {s_j})^{n_j}$ are bounded a.e. on $ B_{\epsilon}(E) \cap \mathbb{T}$ and similarly \eqref{R5} is bounded a.e. on $ B_{\epsilon}(F) \cap \mathbb{T}$ since $v \prod_{j \in F}({z}- {s_j})^{n_j}$ and $\frac{1}{\prod_{i \in F}(z-t_i)^{k_i}}$ are both bounded a.e. on $ B_{\epsilon}(F) \cap \mathbb{T}$. Since $\frac{v}{\prod_{i \in F}(z-t_i)^{k_i}}\prod_{j \in F}({z}- {s_j})^{n_j}$ may only have poles in  $E \cup F$ or, we further conclude that $\frac{v}{\prod_{i \in F}(z-t_i)^{k_i}}\prod_{j \in F}({z}- {s_j})^{n_j} \in L^{\infty}(\mathbb{T})$. So
$$
P( {v}f \prod_{j \in F}( {z} -  {s_j})^{n_j}) = P( (  {z} -  {t_i})^{k_i}g)
$$
for some $g \in L^2(\mathbb{T})$. A similar reasoning used to deduce \eqref{Rbetter} shows
$$
\prod_{j \in F}( {z} -  {s_j})^{n_j}P( {v}f) +  {q_1} = P(g)(  {z} -  {t_i})^{k_i} + {q_2}
$$ and so
\begin{align}
    &\int_{B_{\epsilon}(t_i)\cap \mathbb{T}} |u T_{v}(f)|^2  \mathrm{d}m \leqslant \\
    &\ \int_{B_{\epsilon}(t_i)\cap \mathbb{T}} \left|\frac{ P(g) u(  {z} -  {t_i})^{k_i}}{\prod_{j \in F}( {z} -  {s_j})^{n_j}}\right|^2 \mathrm{d}m+ \int_{B_{\epsilon}(t_i)\cap \mathbb{T}}\left| \frac{ {q_1}- {q_2}}{\prod_{j \in F}( {z} -  {s_j})^{n_j}}\right|^2 \mathrm{d}m.
\end{align}
To see the quantity above is finite observe that the first integral is finite since $P(g) \in L^2(\mathbb{T})$ and $\frac{ u(  {z} -  {t_i})^{k_i}}{\prod_{j \in F}( {z} -  {s_j})^{n_j}} \in L^{\infty}(B_{\epsilon}(t_i)\cap \mathbb{T})$ and the second integral is finite since it is the integral of a bounded function on $B_{\epsilon}(t_i)\cap \mathbb{T}$.
\end{proof}


\section{The Two-Weighted Riesz Projection}\label{Harmonicanalysis}
This section shows the product of Toeplitz operators being bounded is equivalent to a famous problem in Harmonic Analysis, which asks when the two-weighted Riesz projection is bounded.

In \cite{prodH1} it was first observed that one has the following commutative diagram.

\begin{equation}\label{equiv}
    \begin{array}{ccccc}
& H^2(\mathbb{T}) & \xrightarrow{T_u T_v} & H^2(\mathbb{T}) & \\
M_{v} & \downarrow & & \uparrow & M_u \\
& L^2\left(\frac{1}{|v|^2}, \mathbb{T}\right) & \xrightarrow{P_R} & H^2(\mathbb{T})\left(|u|^2, \mathbb{T}\right) &
\end{array}
\end{equation}
Here, $ P_R: L^2\left(\mathbb{T}, \frac{1}{|v|^2}\right) \rightarrow L^2\left(\mathbb{T},|u|^2\right)$ is the two-weighted Riesz projection and $M_u, M_v$ on the vertical sides denote multiplication with the respective symbols which are isometric by definition of the weights. Hence, boundedness of $ P_R: L^2\left(\frac{1}{|v|^2}, \mathbb{T} \right) \rightarrow L^2\left(|u|^2, \mathbb{T} \right)$ is equivalent to boundedness of the product $T_u T_v$ acting on $H^2(\mathbb{T})$. 

The problem of classifying those pairs of weights $(\rho_1, \rho_2)$ for which the two-weighted Riesz projection
\begin{equation}\label{hilbertT}
    P_R: L^2(\mathbb{T}, \rho_1) \rightarrow L^2(\mathbb{T}, \rho_2 )
\end{equation}
or equivalently, the two-weighted Hilbert transform, is bounded has a rich history within Harmonic Analysis. It was conjectured that the condition
\begin{equation}\label{condition}
    \sup _{|w| <1} {P_w}(\rho_1){P_w}\left(\rho_2^{-1}\right)(z)<\infty,
\end{equation}
where $P_w(\rho_1)$ is the Poisson extension of $\rho_1$ at $w$,
characterises \eqref{hilbertT}. 
However, Nazarov disproved this conjecture 
in 1997 \cite{prodH3}. The problem of characterising boundedness
of \eqref{hilbertT} has been the subject of intense recent research activity, see e.g. \cite{HarmA1, HarmA2, HarmA3, HarmA4, HarmA5} and the references therein.

Using the equivalence highlighted in \eqref{equiv} and Theorem \ref{mainT} one deduces the following.

\begin{Theorem}
Let $u \in H(\mathbb{T}),v \in L^2(\mathbb{T})$ be an admissible Toeplitz pair. Then the two-weighted Riesz projection
(1.13) $P_R: L^2(\mathbb{T}, \frac{1}{|v|^2}) \rightarrow L^2(\mathbb{T}, |u|^2)$ is bounded if and only if $uv \in L^{\infty}(\mathbb{T})$.
\end{Theorem}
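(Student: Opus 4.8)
The plan is essentially to observe that this is a direct corollary of Theorem \ref{mainT} together with the commutative diagram \eqref{equiv}, so the ``proof'' should really just be the bookkeeping of translating one boundedness statement into the other. First I would note that the vertical arrows $M_v \colon H^2(\mathbb{T}) \to L^2(\mathbb{T}, \tfrac{1}{|v|^2})$ and $M_u \colon L^2(\mathbb{T},|u|^2) \to H^2(\mathbb{T})$ are surjective isometries: the weights are chosen precisely so that $\|vf\|_{L^2(1/|v|^2)} = \|f\|_{H^2}$ and $\|u^{-1}g\|_{H^2} = \|g\|_{L^2(|u|^2)}$, and surjectivity holds because $v$ is nonzero a.e. (it lies in $L^2(\mathbb{T})$ and, being part of an admissible Toeplitz pair, $\log|v|$ is integrable, so $v \neq 0$ a.e.), and likewise $u$ is nonzero a.e. Under these identifications the diagram \eqref{equiv} commutes on the nose, i.e. $P_R = M_v \circ (T_u T_v) \circ M_u$ after suitable reading of the arrows, so $P_R$ is bounded if and only if $T_u T_v$ is bounded on $H^2(\mathbb{T})$, with equal norms.

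Having established that equivalence, I would simply invoke Theorem \ref{mainT}: since $(u,v)$ is an admissible Toeplitz pair, $T_u T_v$ is bounded on $H^2(\mathbb{T})$ if and only if $uv \in L^\infty(\mathbb{T})$. Chaining the two equivalences gives the statement. Concretely the write-up would be: ``By the commutative diagram \eqref{equiv}, in which $M_u$ and $M_v$ are surjective isometries, the two-weighted Riesz projection $P_R \colon L^2(\mathbb{T},\tfrac{1}{|v|^2}) \to L^2(\mathbb{T},|u|^2)$ is bounded if and only if $T_u T_v$ is bounded on $H^2(\mathbb{T})$. By Theorem \ref{mainT}, the latter holds if and only if $uv \in L^\infty(\mathbb{T})$.''

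There is essentially no mathematical obstacle here; the only points that need a sentence of care are (i) checking that $M_u, M_v$ are genuinely onto (hence unitary, not merely isometric), which rests on $u$ and $v$ being nonzero a.e.\ on $\mathbb{T}$ — and one should make sure that this is indeed forced by membership in an admissible Toeplitz pair, or else add it as a standing hypothesis; and (ii) making sure the diagram is read with the arrows oriented correctly, since $M_u$ as drawn points into $H^2(\mathbb{T})$ rather than out of it, so the composite realizing $P_R$ is $M_u^{-1} \circ T_u T_v \circ M_v^{-1}$. Neither of these is a real difficulty, so the proof is short; the ``content'' of the theorem is entirely in Theorem \ref{mainT}, and this statement is just its harmonic-analytic restatement via \cite{prodH1}.
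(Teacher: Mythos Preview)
Your approach is exactly the paper's: the paper states this theorem immediately after the diagram \eqref{equiv} with the single sentence ``Using the equivalence highlighted in \eqref{equiv} and Theorem \ref{mainT} one deduces the following,'' and gives no further proof. One small caution: your claim that $M_v \colon H^2(\mathbb{T}) \to L^2(\mathbb{T},\tfrac{1}{|v|^2})$ is \emph{surjective} is not correct as stated (its range is $vH^2$, whereas $L^2(\mathbb{T},\tfrac{1}{|v|^2}) = vL^2$), so nonvanishing of $v$ alone does not give unitarity; but since both you and the paper ultimately defer the boundedness equivalence to \cite{prodH1} rather than proving it from the diagram, this does not affect the argument.
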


Motivated by the conjectured condition \eqref{condition} there was also was a conjectured answer to Question \ref{Q}. In \cite{sarasonprobbook} Treil showed that for $u, v \in H^2(\mathbb{T})$ outer functions, if the product $T_u T_{\overline{v}}$ is bounded, then \begin{equation}\label{Poisson}
    \sup _{|w|<1} P_w\left(|u|^2\right) P_w\left(|v|^2\right)<\infty.
\end{equation} 
Sarason conjectured in \cite{sarasonprobbook} that \eqref{Poisson} is sufficient for boundedness of $T_u T_{\overline{v}}$ on $H^2(\mathbb{T})$. Nazarov's counterexample \cite{prodH3} to the conjecture that \eqref{condition} is equivalent to the mapping in \eqref{hilbertT} being bounded consequently also showed that \eqref{Poisson} is not sufficient for boundedness of $T_uT_{\overline{v}}$ on $H^2(\mathbb{T})$. However, the following corollary shows that Sarason's conjecture is true when $u,v$ are an admissible Toeplitz pair.

\begin{Corollary}\label{5.2}
Let $u, v \in H^2(\mathbb{T})$ be outer functions which are an admissible Toeplitz pair. Then $T_uT_{\overline{v}}$ is bounded on $H^2(\mathbb{T})$ if and only if \eqref{Poisson} holds.
\end{Corollary}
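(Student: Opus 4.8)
The plan is to deduce Corollary \ref{5.2} directly from Theorem \ref{mainT} together with the classical result of Treil quoted above, which already gives one direction. The only real content to supply is the implication that, for outer functions $u,v \in H^2(\mathbb{T})$ forming an admissible Toeplitz pair, the Poisson condition \eqref{Poisson} forces $uv \in L^{\infty}(\mathbb{T})$; once this is established, Theorem \ref{mainT} immediately yields boundedness of $T_u T_{\overline{v}}$. Conversely, if $T_u T_{\overline{v}}$ is bounded then \eqref{Poisson} holds by Treil's theorem, so no new argument is needed there. (One should note that since $u,v \in H^2(\mathbb{T})$, i.e. $\overline{v} \in \overline{H^2(\mathbb{T})}$, the symbol pair falls under case $(d)(1)$ of Definition \ref{ATP}, so the hypothesis of Theorem \ref{mainT} applies.)

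The heart of the matter is therefore the pointwise estimate. First I would observe that for each fixed $w \in \mathbb{D}$, since $u, v$ are analytic on $\mathbb{D}$, $P_w(|u|^2) \geqslant |u(w)|^2$ and $P_w(|v|^2) \geqslant |v(w)|^2$ by subharmonicity of $|u|^2$ and $|v|^2$ (equivalently, by the mean value inequality applied through the Poisson kernel). Hence \eqref{Poisson} already gives $\sup_{w \in \mathbb{D}} |u(w)|^2 |v(w)|^2 < \infty$, i.e. $uv$ is bounded on the open disk $\mathbb{D}$. It then remains to promote boundedness on $\mathbb{D}$ to boundedness of the boundary function on $\mathbb{T}$. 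Since $u, v \in H^2(\mathbb{T})$ they lie in the Nevanlinna class, so $uv \in N$; a function in the Nevanlinna class which is bounded on $\mathbb{D}$ has radial boundary values bounded by the same constant almost everywhere on $\mathbb{T}$ (indeed $uv$, being bounded and analytic on $\mathbb{D}$, lies in $H^{\infty}(\mathbb{T})$ with $\|uv\|_{\infty} = \sup_{\mathbb{D}}|uv|$, and this sup norm agrees with the essential sup of the boundary values). Therefore $uv \in L^{\infty}(\mathbb{T})$.

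With $uv \in L^{\infty}(\mathbb{T})$ in hand, Theorem \ref{mainT} applied to the admissible Toeplitz pair $u, v$ gives that $T_u T_{\overline{v}}$ is bounded on $H^2(\mathbb{T})$, completing the forward implication; the reverse implication is Treil's result. Assembling these two pieces proves the corollary. I do not anticipate a genuine obstacle here: the argument is essentially a short bridge between Theorem \ref{mainT}, the elementary subharmonicity bound $P_w(|g|^2) \geqslant |g(w)|^2$, and the standard fact that a bounded analytic function on $\mathbb{D}$ has essentially bounded boundary values. The only point requiring a little care is confirming that the outer, admissible-Toeplitz-pair hypotheses on $u,v$ indeed place the pair within the scope of Theorem \ref{mainT} (via case $(d)(1)$), which is precisely the remark already recorded after Definition \ref{ATP}.
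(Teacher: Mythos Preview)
Your proposal is correct and follows essentially the same approach as the paper: cite Treil for one direction, show that \eqref{Poisson} forces $uv \in L^{\infty}(\mathbb{T})$, and then invoke Theorem \ref{mainT}. The only cosmetic difference is in the elementary step---the paper passes directly to the boundary via the a.e.\ radial limit $P_w(|u|^2) \to |u(t)|^2$, whereas you first bound $|uv|$ on $\mathbb{D}$ using $|u(w)|^2 \leqslant P_w(|u|^2)$ and then take boundary values; both routes are standard one-liners.
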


\begin{proof}
The forward implication is shown in \cite[Problem 7.9]{sarasonprobbook} so we only prove the backward implication. It is well known that for a.e.$ t \in \mathbb{T}$, as ${w \rightarrow t}$ radially, $ P_w\left(|u|^2\right) \rightarrow |u|^2 (t) $. Thus if $\sup _{|w|<1} P_w\left(|u|^2\right) P_w\left(|v|^2\right) \leqslant M$, then for a.e. $ t \in \mathbb{T}$
$$
|u(t) v(t)| = \sqrt{|u|^2 (t) |v|^2 (t)} = \sqrt{\lim_{w \to t \, \, \text {(radially)}} P_w\left(|u|^2\right)P_w\left(|v|^2\right)} \leqslant \sqrt{M}.
$$
The result now follows from Theorem \ref{mainT}.
\end{proof}

\subsection{Range Spaces}
If $\mu $ is a positive finite Borel measure on the closed unit disk and $\mathcal{H}$ is a Hilbert space of analytic functions on $\mathbb{D}$ we say that $\mu$ is a Carleson measure for $\mathcal{H}$ when there exists a $C>0$ such that $
\|f\|_\mu \leqslant C \|f\|_{\mathcal{H}}
$
for all $f \in \mathcal{H}$.

In this subsection we show boundedness of $T_uT_{{v}}$  is also equivalent to $|u|^2 dm$ being a Carleson measure for the range space of $T_{{v}}.$ Range spaces were formally introduced by Sarason \cite{sarason1994sub}, and range spaces of co-analytic Toeplitz operators with bounded symbols were studied in \cite{rangespaces, fricain2019multipliers, mccarthy1990common}. We first generalise the notion of range spaces of co-analytic Toeplitz operators to unbounded symbols. 

\begin{Lemma}
Let $v \in H^2(\mathbb{T})$ be outer and $0<s<1$. The bounded map $\Tilde{T_{\overline{v}}}: H^2(\mathbb{T}) \to H^s(\mathbb{T})$, where $\Tilde{T}_{\overline{v}}(f) = P(\overline{v}f)$ is injective. 
\end{Lemma}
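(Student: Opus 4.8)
The statement to prove is that for an outer function $v \in H^2(\mathbb{T})$ the map $\Tilde{T_{\overline{v}}} \colon H^2(\mathbb{T}) \to H^s(\mathbb{T})$, $f \mapsto P(\overline{v}f)$, is injective. The natural strategy is to suppose $P(\overline{v}f) = 0$ for some $f \in H^2(\mathbb{T})$ and deduce $f = 0$. The key observation is that $P(\overline{v}f) = 0$ means that, on $\mathbb{T}$, the function $\overline{v}f$ has no non-negative Fourier coefficients, i.e. $\overline{v}f \in \overline{z H^1(\mathbb{T})}$ (note $\overline v f \in L^1(\mathbb{T})$ since $v, f \in H^2$). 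Taking complex conjugates, $v\,\overline{f} \in z H^1(\mathbb{T})$, so we may write $v\,\overline{f} = \overline{z}\,\cdot\,(z g)$ — more precisely $\overline{z}\,v\,\overline f \in \overline{H^1(\mathbb{T})}$ as well, which forces $v\,\overline f$ to be, simultaneously, (a conjugate-analytic multiple coming from $\overline f$ times an analytic function $v$). I would then pass to the disk: both $v$ and $f$ extend holomorphically to $\mathbb{D}$, and the identity $P(\overline v f)=0$ should be leveraged to show $f/v$ or $v/f$ behaves like a bounded-type function with a sign obstruction.

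Concretely, here is the cleanest route. Suppose $f \not\equiv 0$ and factor $f = f^i f^o$ into inner and outer parts. The condition $\overline{v}f \in \overline{zH^1(\mathbb{T})}$ says $\langle \overline v f, z^n\rangle = 0$ for all $n \geq 0$, equivalently $\langle f, v z^n \rangle = 0$ in the $L^2$ pairing for all $n \geq 0$, i.e. $f \perp v \cdot z^n$ for every $n \geq 0$; but $v$ outer means $\{v z^n : n \geq 0\}$ has dense span in $H^2(\mathbb{T})$ (this is the defining feature of outer functions: $v$ is cyclic for the shift, so $\overline{\operatorname{span}}\{v z^n\} = H^2(\mathbb{T})$). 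Hence $f \perp H^2(\mathbb{T})$, and since $f \in H^2(\mathbb{T})$ we get $f = 0$, a contradiction. So the whole proof reduces to: (i) rewriting $P(\overline v f) = 0$ as the orthogonality $f \perp v z^n$ for all $n \geq 0$, and (ii) invoking cyclicity of outer functions for the forward shift on $H^2(\mathbb{T})$.

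The one subtlety — and the step I'd expect to need the most care — is the integrability bookkeeping when taking the inner product. Since $v, f \in H^2(\mathbb{T})$, the product $\overline{v}f$ lies only in $L^1(\mathbb{T})$, not $L^2(\mathbb{T})$, so "$P(\overline v f) = 0$" should be interpreted via the $L^1 \to H^s$ projection discussed in the preliminaries, and the pairing $\langle \overline v f, z^n\rangle$ is the honest integral $\int_{\mathbb{T}} \overline{v(\zeta)} f(\zeta) \overline{\zeta}^{\,n}\,dm(\zeta)$, which converges absolutely for each fixed $n$ because the integrand is in $L^1(\mathbb{T})$. That this integral vanishing for all $n \geq 0$ is exactly the statement $\widehat{\overline v f}(n) = 0$ for $n \geq 0$ is just the definition of $P$; and $\widehat{\overline v f}(n) = \int \overline{v}f\,\overline{\zeta}^n\,dm = \overline{\,\int v\,\overline f\,\zeta^n\,dm\,} = \overline{\langle vf^{\perp}\text{-type}\rangle}$ — but cleanest is to note $\widehat{\overline v f}(n) = \int f(\zeta)\overline{v(\zeta)\zeta^n}\,dm(\zeta) = \langle f, v z^n\rangle_{H^2}$, which is legitimate since $v z^n \in H^2(\mathbb{T})$ and $f \in H^2(\mathbb{T})$. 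Once this identification is in place, the cyclicity argument finishes things immediately, with no further analytic estimates needed; the role of $s$ is vacuous beyond ensuring $\Tilde{T_{\overline v}}$ is well defined at all.
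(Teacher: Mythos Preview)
Your argument is correct and takes a genuinely different route from the paper's. You reduce $P(\overline{v}f)=0$ to the vanishing of the Fourier coefficients $\widehat{\overline v f}(n)=\langle f, v z^n\rangle_{H^2}$ for all $n\geq 0$, and then invoke Beurling's cyclicity of outer functions for the forward shift to conclude $f\perp H^2$, hence $f=0$. The paper instead argues function-theoretically: it first shows (via density of $L^2$ in $L^1$ and continuity of $P_-:L^1\to L^s$) that $\operatorname{ran} P_-\subseteq \overline{zH^s(\mathbb{T})}$, so $f\overline{v}\in \overline{zH^s}$; then, since $v$ is outer, division gives $\overline f\in zN^+$, and the Smirnov identity $N^+\cap L^2=H^2$ forces $f\in \overline{zH^2}\cap H^2=\{0\}$. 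Your approach is more elementary and self-contained---it needs only the Fourier definition of $P$ and the classical shift-cyclicity of outer functions, and indeed, as you note, never touches the $H^s$ machinery beyond well-definedness. The paper's route is a bit heavier (it imports the Smirnov class and the $H^s$ closedness result from \cite{cima2000backward}), but the intermediate fact $\operatorname{ran} P_-\subseteq \overline{zH^s}$ and the division-by-outer technique are reusable in contexts where one lacks a Hilbert-space pairing.
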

\begin{proof}
We first show the range of $P_- := I_d - P : L^1(\mathbb{T}) \to L^s(\mathbb{T})$ is contained in $\overline{ z H^s(\mathbb{T})}$. Given $f_0 \in L^1(\mathbb{T})$, let $( f_n)_{n \in \mathbb{N}}$ be a sequence in $L^2(\mathbb{T})$ such that $f_n \overset{L^1(\mathbb{T})}{\to} f_0$. As $P_-(f_n) \in \overline{z H^2} $, by continuity of $P_- : L^1(\mathbb{T}) \to L^s(\mathbb{T})$ we must have that $P_-(f_0) $ lies in the closure of $\overline{z H^2}$ in the $L^s(\mathbb{T})$ metric. However, Theorem 3.2.7 in \cite{cima2000backward} shows that $H^s(\mathbb{T})$ is closed in $L^s(\mathbb{T})$, and as $H^2(\mathbb{T})$ is dense in $H^s(\mathbb{T})$ (claim 1 in Proposition 6.1.6 in \cite{cima2000backward}) it follows that the closure of $\overline{z H^2}$ in the $L^s(\mathbb{T})$ norm is $\overline{z H^s(\mathbb{T})}$. Thus $P_-(f_0) \in \overline{z H^s(\mathbb{T})}$, and as our choice of $f_0 \in L^1(\mathbb{T})$ was arbitrary it follows that $\ran P_- \subseteq \overline{z H^s(\mathbb{T})}$.

The Smirnov class, denoted $N^+$, may be defined by $$
N^+ =  \{ \frac{f_1}{f_2} : f_2 \text{ is outer  } f_1, \, f_2 \in H^s(\mathbb{T}) \}.
$$
Proposition 2.2 in \cite{toeplitzkernelbackwardarticle} shows that $N^+ \cap L^p(\mathbb{T}) = H^p(\mathbb{T})$ for $0 < p < \infty$. Thus as $v$ is outer, if $f \in \ker \Tilde{T_{\overline{v}}}$, we have $f \overline{v} =P_- ( f \overline{v})  \in \overline{z H^s(\mathbb{T})}$, and so $f \in \overline{z N^+} \cap H^2 = \overline{z N^+} \cap L^2 \cap H^2 = \overline{ z H^2} \cap H^2 = \{ 0 \}$.
\end{proof}

\begin{Proposition}\label{D}
Let $v \in H^2(\mathbb{T})$ be outer. The space $\ran \Tilde{T_{\overline{v}}}$ equipped with the inner product $\langle \Tilde{T_{\overline{v}}} (f) , \Tilde{T_{\overline{v}}} (g ) \rangle_{\ran} := \langle f, g \rangle_{H^2}$ is a RKHS with reproducing kernel at the point $x \in \mathbb{D}$ given by $\Tilde{T_{\overline{v}}}( v k_x)$.
\end{Proposition}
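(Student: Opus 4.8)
The plan is to verify the three defining properties of a reproducing kernel Hilbert space for the space $\ran \Tilde{T_{\overline{v}}}$ with the prescribed inner product. First I would check that $\langle \cdot, \cdot \rangle_{\ran}$ is actually well-defined: because the preceding lemma shows $\Tilde{T_{\overline{v}}}$ is injective on $H^2(\mathbb{T})$, every element $g$ of $\ran \Tilde{T_{\overline{v}}}$ has a unique preimage $f \in H^2(\mathbb{T})$, so setting $\langle \Tilde{T_{\overline{v}}}(f), \Tilde{T_{\overline{v}}}(h) \rangle_{\ran} := \langle f, h \rangle_{H^2}$ is unambiguous, and it inherits being an inner product from $H^2(\mathbb{T})$. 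Moreover $\Tilde{T_{\overline{v}}} : H^2(\mathbb{T}) \to \ran \Tilde{T_{\overline{v}}}$ becomes by construction a unitary (in particular isometric) isomorphism, so $\ran \Tilde{T_{\overline{v}}}$ is complete and hence a Hilbert space.

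Next I would identify the reproducing kernel. Fix $x \in \mathbb{D}$ and let $g = \Tilde{T_{\overline{v}}}(f) \in \ran \Tilde{T_{\overline{v}}}$ for $f \in H^2(\mathbb{T})$. The key computation is to evaluate $g(x) = P(\overline{v}f)(x)$. Using the integral form of the Riesz projection and the reproducing property of $\Tilde{k_x}(z) = 1/(1-\overline{x}z)$ on $H^2(\mathbb{T})$, one has $P(\overline{v}f)(x) = \langle \overline{v} f, \Tilde{k_x} \rangle_{H^2} = \langle f, v\,\Tilde{k_x} \rangle_{H^2}$, where the last step moves the (bounded, since $x \in \mathbb{D}$ makes $\Tilde{k_x}$ bounded and $v \in H^2$) multiplication by $\overline{v}$ across the pairing. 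Since $v\,\Tilde{k_x} \in H^2(\mathbb{T})$, we may rewrite this as $\langle f, v\,\Tilde{k_x}\rangle_{H^2} = \langle \Tilde{T_{\overline{v}}}(f), \Tilde{T_{\overline{v}}}(v\,\Tilde{k_x}) \rangle_{\ran}$ by the very definition of the $\ran$-inner product. Hence $g(x) = \langle g, \Tilde{T_{\overline{v}}}(v\,\Tilde{k_x}) \rangle_{\ran}$ for every $g \in \ran \Tilde{T_{\overline{v}}}$, which is precisely the statement that $\Tilde{T_{\overline{v}}}(v\,\Tilde{k_x})$ is the reproducing kernel at $x$; in particular point evaluation at $x$ is a bounded linear functional on $\ran \Tilde{T_{\overline{v}}}$, completing the verification that the space is a RKHS.

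One point requiring a little care, and the step I expect to be the main (though modest) obstacle, is justifying the manipulation $\langle \overline{v}f, \Tilde{k_x}\rangle_{H^2} = \langle f, v\,\Tilde{k_x}\rangle_{H^2}$ and, relatedly, that $g(x)$ genuinely equals $\langle \overline{v}f, \Tilde{k_x}\rangle$ even though $\overline{v}f$ need only lie in $L^1(\mathbb{T})$ rather than $L^2(\mathbb{T})$. Here I would use that $P(\overline{v}f)(x) = \int_{\mathbb{T}} \frac{\overline{v(\zeta)}f(\zeta)}{1-\overline{x}\zeta}\, dm(\zeta)$ as an absolutely convergent integral (valid since $\Tilde{k_x} \in L^\infty(\mathbb{T})$ for $x \in \mathbb{D}$ and $\overline{v}f \in L^1(\mathbb{T})$), and then recognize the integrand as $f(\zeta)\,\overline{v(\zeta)\,\Tilde{k_x}(\zeta)}$, so the integral is literally $\langle f, v\,\Tilde{k_x}\rangle_{H^2}$ with $v\,\Tilde{k_x} \in H^2(\mathbb{T})$ — no density or approximation argument is actually needed. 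With that identity in hand the rest is formal, and the proof is complete.
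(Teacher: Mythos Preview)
Your proposal is correct and follows essentially the same route as the paper: well-definedness and the inner-product axioms come from injectivity of $\Tilde{T_{\overline v}}$, completeness from the induced isometry with $H^2$, and the reproducing property from the chain $\langle \Tilde{T_{\overline v}}(f),\Tilde{T_{\overline v}}(v k_x)\rangle_{\ran}=\langle f, v k_x\rangle_{H^2}=\int_{\mathbb T}\frac{f(\zeta)\overline{v(\zeta)}}{1-\overline{\zeta}x}\,dm=P(\overline v f)(x)$. Your extra paragraph justifying the $L^1$--$L^\infty$ pairing is a welcome addition; just watch the conjugation in the Cauchy kernel (the Riesz projection has $1-\overline{\zeta}x$ in the denominator, not $1-\overline{x}\zeta$).
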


\begin{proof}
The inner product $\langle \, ,  \, \rangle_{\ran}$ is well defined because $\Tilde{T_{\overline{v}}}$ is injective. As $\langle \, ,  \, \rangle_{H^2}$ is an inner product, one can verify that $\langle \, ,  \, \rangle_{\ran}$ also satisfies the conditions to be an inner product. Because $\ran \Tilde{T_{\overline{v}}}$ is the isometric image of the complete space $H^2$, it also follows that $\ran \Tilde{T_{\overline{v}}}$ is complete. To see that $\Tilde{T_{\overline{v}}}( v k_x)$ is the reproducing kernel for $\ran \Tilde{T_{\overline{v}}}$ at $x \in \mathbb{D}$, observe that for all $f \in H^2$
$$
\langle \Tilde{T_{\overline{v}}}( f) , \Tilde{T_{\overline{v}}}( v k_x) \rangle_{\ran} = \langle f, v k_{x} \rangle_{H^{2}} = \int_{\mathbb{T}} \frac{f( \zeta) \overline{v}(\zeta)}{1-\overline{\zeta} x} \mathrm{d}m= P(\overline{v} f ) (x) = \Tilde{T_{\overline{v}}}( f) (x).
$$
\end{proof}
\begin{Remark}
Although the proposition above is in the context of Toeplitz operators, the same techniques show that for a bounded injective operator $X:  {H} \to B$, where $ {H}$ is a Hilbert space, one can equip $\ran X$ with an inner product which makes it a Hilbert space, even when $B$ itself is not a Hilbert space.
\end{Remark}

\begin{Proposition}
    Let $u, v \in H^2(\mathbb{T})$ be an admissible Toeplitz pair where $v$ is an outer function. Then $|u|^2 dm$ is a Carleson measure for $\ran \Tilde{T}_{\overline{v}}$ if and only if $uv \in L^{\infty}(\mathbb{T})$.
\end{Proposition}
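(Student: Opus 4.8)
The plan is to translate the Carleson condition into the boundedness of the composed map $\Tilde{T}$ from \eqref{Ttilde}, and then to read off the result from Theorem \ref{mainT} and Lemma \ref{C}. First, by the injectivity of $\Tilde{T_{\overline{v}}}$ (the lemma preceding Proposition \ref{D}) together with Proposition \ref{D}, every element of $\ran \Tilde{T_{\overline{v}}}$ equals $\Tilde{T_{\overline{v}}}(f) = P(\overline{v}f)$ for a unique $f \in H^2(\mathbb{T})$, its range-space norm is $\|f\|_{H^2}$, and its norm in $L^2(|u|^2 dm)$ equals $\|u\,P(\overline{v}f)\|_{L^2(\mathbb{T})}$ (the measure $|u|^2 dm$ is finite because $u \in H^2(\mathbb{T})$; the equality of norms uses that $u \cdot P(\overline{v}f)$ is a product of two Smirnov functions, as $u \in H^2 \subseteq N^+$ and $P(\overline{v}f) \in H^s \subseteq N^+$, so the boundary values of the holomorphic function $\Tilde{T}(f) = u\,P(\overline{v}f)$ are the pointwise product of the boundary values of the factors). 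Consequently, $|u|^2 dm$ is a Carleson measure for $\ran \Tilde{T_{\overline{v}}}$ if and only if the map $\Tilde{T}\colon H^2(\mathbb{T}) \to L^2(\mathbb{T})$, $f \mapsto u\,P(\overline{v}f)$, is bounded.

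For the direction in which $uv \in L^{\infty}(\mathbb{T})$ is assumed: Theorem \ref{mainT} (applied to the admissible pair with second symbol $\overline{v}$, and using that $|u\overline{v}| = |uv|$ a.e.) gives that $T_u T_{\overline{v}}$ is bounded on $H^2(\mathbb{T})$, whence by Lemma \ref{C} the range of $\Tilde{T}$ is contained in $H^2(\mathbb{T})$ and $\Tilde{T}$ is a well-defined map $H^2(\mathbb{T}) \to H^2(\mathbb{T})$. Arguing exactly as in the proof of Lemma \ref{C} --- $\Tilde{T}\colon H^2(\mathbb{T}) \to H(\mathbb{T})$ is continuous by \eqref{Ttilde}, and $H^2$-convergence is stronger than $H(\mathbb{T})$-convergence --- this map has closed graph, hence is bounded by the Closed Graph Theorem; since the $H^2$- and $L^2(\mathbb{T})$-norms agree on $H^2(\mathbb{T})$ by \eqref{obs}, the reduction of the first paragraph delivers the Carleson estimate.

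For the converse, suppose $|u|^2 dm$ is a Carleson measure, so by the reduction $\|u\,P(\overline{v}f)\|_{L^2(\mathbb{T})} \leqslant C \|f\|_{H^2}$ for all $f \in H^2(\mathbb{T})$; in particular $u\,P(\overline{v}f) \in L^2(\mathbb{T}) \subseteq L^1(\mathbb{T})$, so $T_u T_{\overline{v}}(f) = P(u\,P(\overline{v}f))$ is well defined, and $\|T_u T_{\overline{v}}(f)\|_{H^2} \leqslant \|u\,P(\overline{v}f)\|_{L^2(\mathbb{T})} \leqslant C\|f\|_{H^2}$ because the Riesz projection is a contraction on $L^2(\mathbb{T})$. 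Thus $T_u T_{\overline{v}}$ is bounded on $H^2(\mathbb{T})$, and Theorem \ref{mainT} gives $uv \in L^{\infty}(\mathbb{T})$.

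I expect the one subtle point to be the mismatch between the quantity controlled by the Carleson condition, namely the full $L^2(\mathbb{T})$-norm of $\Tilde{T}(f) = u\,P(\overline{v}f)$, and the quantity controlled a priori by boundedness of $T_u T_{\overline{v}}$, namely $\|P(u\,P(\overline{v}f))\|$. In the converse direction this costs nothing since $P$ is a contraction; in the forward direction it is resolved by the closed-graph step, which succeeds precisely because Lemma \ref{C} already puts $\ran \Tilde{T}$ inside $H^2(\mathbb{T})$, so passing to the projection loses no mass. Beyond this and the routine verification of the boundary-value identifications (where the hypotheses $u,v \in H^2(\mathbb{T})$ and $v$ outer enter, via the lemma preceding Proposition \ref{D} and Proposition \ref{D}), the proof is a direct application of Theorem \ref{mainT} and Lemma \ref{C}.
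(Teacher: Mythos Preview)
Your proposal is correct and follows essentially the same route as the paper: reduce the Carleson condition on $\ran \Tilde{T}_{\overline{v}}$ to the inequality $\|u\,P(\overline{v}f)\|_{L^2}\leqslant C\|f\|_{H^2}$ via the isometry in Proposition \ref{D}, and then invoke Theorem \ref{mainT}. The paper's proof is a terse two sentences, while you spell out the passage between $\|u\,P(\overline{v}f)\|_{L^2}$ and boundedness of $T_uT_{\overline{v}}$ more carefully (via Lemma \ref{C} and the Smirnov-class identification $N^+\cap L^2=H^2$); note that your closed-graph step in the forward direction is redundant, since once Theorem \ref{mainT} gives $T_uT_{\overline{v}}$ bounded, Lemma \ref{C} already identifies $T_uT_{\overline{v}}(f)$ with $\Tilde{T}(f)=u\,P(\overline{v}f)\in H^2$ and the bound follows immediately.
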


\begin{proof}
As $\ran \Tilde{T}_{\overline{v}}$ is an isometric copy of $H^2(\mathbb{T})$, we see $|u|^2 dm$ is a Carleson measure for $\ran \Tilde{T}_{\overline{v}}$ if and only if $\int_{\mathbb{T}} |P({\overline{v}}f)|^2 |u|^2 \mathrm{d}m \leqslant  C^2 \|f\|_{\ran}^2 = C^2 \|f\|_{H^2}^2$. The result now follows from Theorem \ref{mainT}
\end{proof}

For convenience, we summarise the results of this section with a theorem.

\begin{Theorem}
    Let $u,v \in H^2(\mathbb{T})$ be an admissible Toeplitz pair with $v \in H^2(\mathbb{T})$ outer. The following are equivalent.
    \begin{enumerate}
        \item $T_u T_{\overline{v}}: H^2 \to H^2$ is bounded
        \item $ P_R: L^2\left(\mathbb{T}, \frac{1}{|v|^2}\right) \rightarrow L^2\left(\mathbb{T},|u|^2\right)$ is bounded
        \item $uv \in L^{\infty}(\mathbb{T})$
        \item $\sup _{|w|<1} P_w\left(|u|^2\right) P_w\left(|v|^2\right)<\infty$
        \item $|u|^2 dm$ is a Carleson measure for the space $\ran \Tilde T_{\overline{v}}$.
    \end{enumerate}
\end{Theorem}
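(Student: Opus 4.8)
The statement is a consolidation of results already established in the paper, so the plan is purely to chain together the individual equivalences, using statement (3), $uv\in L^{\infty}(\mathbb{T})$, as the hub to which every other statement is compared. Before doing so I would record two elementary observations. First, since $|u\overline{v}| = |uv|$ a.e.\ on $\mathbb{T}$, the condition ``$u\overline{v}\in L^{\infty}(\mathbb{T})$'' appearing in Theorem \ref{mainT} for the pair $(u,\overline{v})$ is literally (3). Second, as $u,\overline{v}\in H^2(\mathbb{T})$, condition (d) of Definition \ref{ATP} holds automatically (by the remark following that definition), so $(u,\overline{v})$ is an admissible Toeplitz pair and Theorem \ref{mainT} is in force.

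With these in hand, $(1)\Leftrightarrow(3)$ is exactly Theorem \ref{mainT} applied to the pair $(u,\overline{v})$. Next, $(1)\Leftrightarrow(2)$ follows from the commutative diagram \eqref{equiv}: the vertical arrows $M_u$ and $M_v$ are isometries by the choice of weights, hence $T_uT_{\overline{v}}$ is bounded on $H^2(\mathbb{T})$ precisely when $P_R\colon L^2(\mathbb{T},\tfrac{1}{|v|^2})\to L^2(\mathbb{T},|u|^2)$ is bounded; equivalently one may simply quote the first Theorem of this section, which records $(2)\Leftrightarrow(3)$. The equivalence $(3)\Leftrightarrow(4)$ is supplied by Corollary \ref{5.2} together with $(1)\Leftrightarrow(3)$: the implication $(4)\Rightarrow(3)$ is the radial-limit argument in the proof of that corollary (which needs only $|u|^2,|v|^2\in L^1$), while $(3)\Rightarrow(4)$ passes through $(1)$ and the forward implication of Question \ref{Q} established by Treil in \cite{sarasonprobbook} and recalled in \eqref{Poisson}. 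Finally, $(3)\Leftrightarrow(5)$ is precisely the Proposition immediately preceding this theorem, whose hypotheses---admissible Toeplitz pair with $v$ outer---are exactly those assumed here.

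There is no genuine mathematical obstacle; all the content lies in the cited results, and the proof amounts to citing them in the right order. The only points requiring care are bookkeeping ones: checking that it is the pair $(u,\overline{v})$, not $(u,v)$, to which Theorem \ref{mainT} must be applied; that the outerness of $v$ (needed for the range-space Proposition, for the injectivity of $\Tilde{T}_{\overline{v}}$, and for Corollary \ref{5.2}) is available under the present hypotheses; and that Definition \ref{ATP}(d) is indeed vacuous in the $H^2$ setting so that each invoked statement legitimately applies. Assembling these observations shows the five conditions are mutually equivalent.
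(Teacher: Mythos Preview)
Your proposal is correct and follows essentially the same approach as the paper: both proofs simply chain together the previously established results (the commutative diagram \eqref{equiv}, Theorem \ref{mainT}, Corollary \ref{5.2}, and the Carleson-measure Proposition). The paper uses (a) as the hub while you use (c), but since $(a)\Leftrightarrow(c)$ is invoked first in either case this is a cosmetic difference; your additional bookkeeping about applying Theorem \ref{mainT} to the pair $(u,\overline{v})$ and the automatic validity of Definition \ref{ATP}(d) is more explicit than the paper but not a different argument.
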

\begin{proof}
    The commutative diagram \eqref{equiv} shows$(a)$ and $(b)$ are equivalent. Theorem \ref{mainT} shows $(a)$ and $(c)$ are equivalent. Corollary \ref{5.2} shows $(a)$ and $(d)$ are equivalent. The proposition above shows $(a)$ and $(e)$ are equivalent.
\end{proof}

\section*{Acknowledgements}
The author would like to thank Professor Jani Virtanen for the introduction to the problem and useful discussions.
\newline The author is grateful to EPSRC for financial support (grant EP/Y008375/1). 

\section*{Author information}
\author{Ryan O'Loughlin \\ E-mail address: \href{mailto:r.d.oloughlin@reading.ac.uk}{r.d.oloughlin@reading.ac.uk}}
\newline Department of Mathematics and Statistics, University of Reading, Reading, RG6 6AX, U.K.

 \newpage
 \bibliographystyle{plain}
\bibliography{bibliography.bib}
\end{document}